\def\includecode{1}
\newcommand{\singlespacing}{\def\baselinestretch{1.0}\normalfont}
\newcommand{\origspacing}{\def\baselinestretch{1.5}\normalfont}
\newcommand{\docode}[3]{\section{\texttt{#1}} #2 
  \ifnum \includecode = 1 

  \singlespacing 
  \scriptsize \verbatimtabinput{#3} \origspacing \normalsize
  \else
  \\ \framebox{Code omitted in this copy.}
  \fi}
\newcommand{\op}{{\mathrm{op}}}
\newcommand{\rightpara}[2]{
  \begin{minipage}[b]{#1}
    \begin{flushright}
      \singlespacing
      \rule{0pt}{.5ex} \\
      #2
    \end{flushright}
  \end{minipage}
}
\begin{document}

\prelimpages

 
%
%
\Title{An Eigenspace Approach to Isotypic Projections\\[-11pt] for
  Data on Binary Trees}
\Subtitle{}

\Author{Nathaniel Eldredge}
\Advisor{Michael E. Orrison}
\Reader{Shahriar Shahriari}
\Month{May}
\Year{2003}
\titlepage

%
%
\setcounter{page}{-1}

\abstract{ The classical Fourier transform is, in essence, a way to
  take data and extract components (in the form of complex
  exponentials) which are invariant under cyclic shifts.  We consider
  a case in which the components must instead be invariant under
  automorphisms of a binary tree.  We present a technique by which a
  slightly relaxed form of the generalized Fourier transform in this
  case can eventually be computed using only simple tools from linear
  algebra, which has possible advantages in computational efficiency.
  }

%
%
\tableofcontents

\listoftables
 
%
%
\acknowledgments{
  {\narrower\noindent

  My deepest thanks go to my advisor, Prof. Michael Orrison, for his
  constant and invaluable input into this project.  I would also like
  to thank Prof. Shahriar Shahriari for acting as second reader.
  David Uminsky and Ross Richardson provided excellent advice on many
  issues, large and small, as well as general encouragement throughout
  the thesis process.  Finally, John Cloutier contributed some very
  useful advice on a talk I gave about this project.  \par} }
%

\textpages


\chapter{Introduction}

\section{The Fourier transform}

The Fourier transform is known to most scientists and engineers as a
tool for data analysis.  Given a signal, the classical Fourier
transform recovers its spectrum, which describes how the signal can be
broken into sines and cosines, or, equivalently, complex exponentials.
In the discrete case, where the signal consists of a finite number of
data points, there are well-known computational techniques for this;
most notable is the discrete fast Fourier transform (FFT) algorithm
due to Cooley and Tukey \cite{cooley-tukey}.  The FFT allows the
Fourier transform to be computed efficiently, and has become an
extremely important tool for digital signal processing in fields
ranging from physics and engineering to electronic music.

However, complex exponentials are not the only ``pieces'' into which
we might wish to decompose a signal.  The crucial feature of functions
like $e^{it}$ is that they are in a sense invariant under translation;
shifting $t$ changes the function only by a (complex) constant
multiple.  So the classical Fourier transform extracts from the signal
components which fit nicely into this translational structure.  But
there are other sorts of structure we might seek.  In fact, this
structure can be described by a group, and the idea of the Fourier
transform generalizes to cover the case of an arbitrary group.
Unfortunately, though, if computational efficiency is needed, more
work must be done.  Although the Cooley-Tukey FFT algorithm can be
generalized to some extent (see for instance
\cite{maslen:cooley-tukey}), for many groups, efficient Fourier
transform algorithms are not obvious or not known.

The Fourier transform can be thought of as a change of basis; in fact,
this is how it is often characterized in analysis.  In essence, we are
decomposing our signal space into one-dimensional subspaces, and
looking at the components of the signal that lie in these subspaces.
In some cases, it can be helpful if we relax this condition somewhat,
and decompose the signal into larger components which nevertheless
retain the important structural information we seek.  This is the idea
of isotypic projections, which we discuss in Chapter \ref{reptheory}.

\section{Eigenspaces and our approach}

One disadvantage of generalizations the Cooley-Tukey FFT is that it
relies heavily on algebraic facts about the group involved, making it
rather complicated to implement.  We shall describe an approach to
isotypic projections which relies on straightforward techniques from
linear algebra.  In particular, it can be possible to compute isotypic
projections with respect to some group via an algorithm for eigenspace
projections, if the appropriate eigenspaces are used.  The goal, then,
is to find a ``separating set'' of simultaneously diagonalizable
linear transformations whose eigenspaces are the subspaces we seek.
Chapter \ref{eigen} explains the details of this approach.  Of course,
finding such a set will necessarily require an algebraic understanding
of the group in question; but once it is found, implementation of the
projection algorithm becomes elementary.

We will be working with the automorphism groups of binary trees, to be
described in Chapter \ref{auto}.  These groups are of interest for
several tasks in signal processing; see for instance Section
\ref{haar}, as well as \cite{foote:image1} and
\cite{rockmore:wreath-fft}.  In addition, the decomposition of their
signal spaces has interesting combinatorial properties; see Section
\ref{wn-reptheory} and \cite{orrison:root}.

\section{Previous work}

The idea of using eigenspaces to compute isotypic projections was
explored in detail in \cite{orrison:thesis}.  Its inspiration comes
from the one of the myriad properties of the Jucys-Murphy elements
from the symmetric group (Section \ref{jucys-murphy} and
\cite{diaconis:murphy}), which can be applied for just this purpose.
Generalizations of these elements exist (\cite{diaconis:murphy},
\cite{russian}, \cite{ram}), but by no means have they been
generalized to all groups.

Much is known about the structure of the automorphism groups of binary
trees, and wreath product groups in general (see Section
\ref{wreath}).  Their representation theory is examined in
\cite{kerber1} and \cite{kerber2}, and more recently a combinatorial
approach to the more specific case of iterated wreath products of
cyclic groups is in \cite{orrison:root}.  Spectral analysis on these
groups has been considered in \cite{foote:image1} and
\cite{rockmore:wreath-fft}, with applications to signal and image
processing.

Computational details about the linear algebra involved have also been
considered.  \cite{orrison:thesis} gives bounds on the computational
complexity of using separating sets for several groups.
\cite{norton:thesis} presents an important optimization in the case of
the symmetric group, which is generalized in \cite{summer}.

\section{Structure of this paper}

In Chapter~\ref{reptheory} we review necessary concepts and facts from
the representation theory of finite groups.  Chapter~\ref{eigen}
discusses the ``eigenspace approach'' to isotypic projections, through
which the necessary computations for isotypic projections can be done
using simple linear algebra tools.  Chapter~\ref{auto} describes the
automorphism groups with which we shall concern ourselves.  Finally,
Chapter~\ref{sepset} constructs some separating sets for small cases.


\chapter{Representation Theory} \label{reptheory}

In this chapter we give a review of the necessary elements of
representation theory that are needed to read this paper, and lay out
the terminology and notation we shall use.  An excellent introduction
to the subject is \cite{james}.  For readers already acquainted with
representation theory, the first chapter of \cite{sagan} has a good
concise review.  \cite{dummit} is a very complete reference for any
unfamiliar concepts from group theory.

\section{Group representations} \label{groupreps}


Representation theory is, in essence, the idea of expressing abstract
algebra in terms of linear algebra.  Operations in a group are
transformed into operations in a vector space.

Let $G$ be a finite group.

\begin{definition}
  A \term{$G$-module} or \term{representation} of $G$ is a
  finite-dimensional complex vector space $V$ on which $G$ acts
  linearly.  That is, for any $g,h \in G$, $\vec{v},\vec{w} \in V$,
  and $\alpha, \beta \in \C$, we have:
  \begin{enumerate}
    \item $g\vec{v}$ is some element of $V$;
    \item If $e$ is the identity of $G$, then $e \vec{v} = \vec{v}$;
    \item $g(h\vec{v}) = (gh)\vec{v}$ (1, 2, and 3 together define an
    action of $G$ on $V$);
    \item $g(\alpha \vec{v} + \beta \vec{w}) = \alpha(g\vec{v}) +
    \beta(g \vec{w})$ (the action respects the linear structure of $V$).
  \end{enumerate}
\end{definition}

What we have, then, is that each $g \in G$ becomes a linear
transformation of $V$, and these transformations compose in the same
way that elements of $G$ multiply.  Since elements of $G$ have
inverses, so do these transformations.  So we can also think of this
correspondence as a homomorphism $\phi$ from $G$ to $GL(V)$, the set
of invertible linear transformations of $V$.  (Many authors use the
word ``representation'' to refer to this homomorphism instead of the
corresponding module.)

Once a basis for $V$ is fixed, each $\phi(g)$ can be represented as an
$n \times n$ matrix, where $n = \dim V$.  By taking the traces of
these matrices, we obtain the \term{character} $\chi$ corresponding to
$\phi$, defined by $\chi(g) = \tr \phi(g)$.  Since similar matrices
have the same trace (that is, $\tr ABA^{-1} = \tr B$), we see that the
character is independent of the basis chosen for $V$.  In fact, two
representations have the same character if and only if they are
isomorphic.  Also, $\chi(ghg^{-1}) = \tr[\phi(g)\phi(h)\phi(g)^{-1}] =
\tr \phi(h) = \chi(h)$, so that $\chi$ takes the same value on
conjugate elements of $G$.  A function $f : G \to \C$ with this
property is called a \term{class function}, since it can be considered
a function on the set of conjugacy classes of $G$.

We now consider how modules decompose.

\begin{definition}
  Let $V$ be a $G$-module.  A subspace $U \subset V$ is a
  \term{submodule} of $V$ if for each $g \in G$, $\vec{u} \in U$, we
  have $g\vec{u} \in U$ (that is, $U$ is closed under the action of
  $G$).  We say $V$ is \term{irreducible} if it has no submodules
  other than the trivial one $\{\vec{0}\}$ and itself.
\end{definition}

Irreducible modules are the most fundamental modules, as is shown by
the following central theorem.

\begin{theorem}[Maschke's Theorem]
  If $V$ is a nontrivial $G$-module, then we can write
  \begin{equation*}
    V = W_1 \oplus \dots \oplus W_k
  \end{equation*}
  where $W_1, \dots, W_k$ are irreducible $G$-modules.
\end{theorem}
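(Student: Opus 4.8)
The plan is to reduce the theorem to a single structural fact — that every submodule of $V$ admits a complementary submodule — and then finish by induction on $\dim V$. First I would establish this complementation property: given any submodule $U \subset V$, I claim there is a submodule $U'$ with $V = U \oplus U'$. Once this is in hand, the decomposition into irreducibles follows easily. If $V$ is itself irreducible we are done with $k = 1$; otherwise $V$ has a proper nontrivial submodule $W_1$, and the complementation property gives $V = W_1 \oplus U'$ with both summands of strictly smaller dimension. Applying the inductive hypothesis to each summand and combining the resulting decompositions yields the desired expression $V = W_1 \oplus \dots \oplus W_k$.

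The heart of the argument — and the step I expect to be the main obstacle — is proving the complementation property, since a naive vector-space complement of $U$ need not be closed under the action of $G$. The key device is an averaging trick that uses the finiteness of $G$ together with the fact that we are working over $\C$. Concretely, I would fix any Hermitian inner product $\langle \cdot, \cdot \rangle$ on $V$ and replace it with the averaged form
\begin{equation*}
  \langle \vec{v}, \vec{w} \rangle_G = \frac{1}{|G|} \sum_{g \in G} \langle g\vec{v}, g\vec{w} \rangle.
\end{equation*}
A direct computation shows that $\langle \cdot, \cdot \rangle_G$ is again a Hermitian inner product and is $G$-invariant, meaning $\langle h\vec{v}, h\vec{w} \rangle_G = \langle \vec{v}, \vec{w} \rangle_G$ for all $h \in G$; this last identity holds because left multiplication by $h$ merely permutes the summands in the defining sum.

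With a $G$-invariant inner product in place, I would take $U' = U^\perp$, the orthogonal complement of $U$ with respect to $\langle \cdot, \cdot \rangle_G$. Standard linear algebra gives $V = U \oplus U^\perp$ as vector spaces, so it remains only to check that $U^\perp$ is a submodule. This is where invariance pays off: if $\vec{w} \in U^\perp$ and $g \in G$, then for every $\vec{u} \in U$ we have $\langle g\vec{w}, \vec{u} \rangle_G = \langle \vec{w}, g^{-1}\vec{u} \rangle_G = 0$, since $g^{-1}\vec{u} \in U$ (as $U$ is a submodule) while $\vec{w}$ is orthogonal to all of $U$. Hence $g\vec{w} \in U^\perp$, establishing closure under the action and completing the complementation property. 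Note that the argument breaks down without the factor $1/|G|$ and the averaging sum, so the finiteness of $G$ and the invertibility of $|G|$ in $\C$ are precisely what make the theorem work.
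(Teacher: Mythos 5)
Your proof is correct. Note that the paper does not actually prove Maschke's Theorem --- it only states it and defers to the cited reference --- so there is no in-paper argument to compare against; the averaging (``unitarian trick'') argument you give, producing a $G$-invariant Hermitian form and taking orthogonal complements followed by induction on $\dim V$, is the standard proof and every step checks out. One tiny quibble: the normalizing factor $1/|G|$ is not actually essential for this version of the argument (positive-definiteness of the averaged form survives without it); what is essential is the finiteness of $G$ so that the averaging sum exists, together with positivity of the original form --- the invertibility of $|G|$ becomes the crucial point only in the companion proof that averages a projection operator rather than an inner product.
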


In other words, every $G$-module can be decomposed into irreducible
modules.  See \cite{sagan} for a proof.

Using characters, we can say more about this decomposition.

\begin{definition}
  Let $\chi$ and $\psi$ be characters associated to representations of
  $G$.  Define the \term{inner product} $\langle \chi, \phi \rangle$
  by
  \begin{equation}
    \langle \chi, \psi \rangle = \frac{1}{\abs{G}} \sum_{g \in G}
    \chi(g) \overline{\psi(g)}.
  \end{equation}
\end{definition}

\begin{theorem} \label{character-inner}
  Let $V$ be a representation of $G$, with associated character
  $\chi$, which decomposes into irreducible submodules as
  \begin{equation*}
    V = m_1 W_1 \oplus m_2 W_2 \oplus \dots \oplus m_k W_k
  \end{equation*}
  where $m_i W_i$ denotes the direct sum of $m_i$ copies of $W_i$, and
  the $W_i$ are pairwise nonisomorphic.  If $\chi_i$ is the character
  associated with $W_i$, then
  \begin{equation}
    \langle \chi, \chi_i \rangle = m_i.
  \end{equation}
\end{theorem}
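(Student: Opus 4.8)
The plan is to reduce the statement to the orthogonality relations for irreducible characters, using the fact that characters are additive under direct sums. First I would establish that additivity. Given the decomposition $V = m_1 W_1 \oplus \dots \oplus m_k W_k$, choose a basis of $V$ obtained by concatenating bases of each irreducible summand (including each of the $m_i$ copies of $W_i$ separately). With respect to this adapted basis every $\phi(g)$ is block diagonal, with one block for each copy of each $W_i$. Since the trace of a block-diagonal matrix is the sum of the traces of its blocks, and the block attached to a copy of $W_i$ contributes exactly $\chi_i(g)$, we obtain
\begin{equation*}
  \chi(g) = m_1 \chi_1(g) + \dots + m_k \chi_k(g)
\end{equation*}
for every $g \in G$; that is, $\chi = \sum_j m_j \chi_j$ as class functions.

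Next I would substitute this expression into the inner product. Because the defining formula sums over $g \in G$, it is linear in its first argument, and since the multiplicities $m_j$ are (real) integers the complex conjugation does not interfere. Hence
\begin{equation*}
  \langle \chi, \chi_i \rangle = \left\langle \sum_j m_j \chi_j, \chi_i \right\rangle = \sum_j m_j \langle \chi_j, \chi_i \rangle.
\end{equation*}

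The final and most substantial step is to invoke the orthonormality of the irreducible characters, namely $\langle \chi_j, \chi_i \rangle = \delta_{ij}$, so that only the $j = i$ term survives and the sum collapses to $m_i$, as claimed. This orthogonality relation is really where all the representation-theoretic content lives: the block-diagonalization and the linearity above are routine, but the fact that distinct irreducible characters are orthogonal (and each of unit length) is the genuine obstacle, and it rests on Schur's lemma applied to the averaged intertwining operators between irreducible modules. Since the present chapter is a review and the $W_i$ are assumed pairwise nonisomorphic, I would cite this first orthogonality relation from \cite{sagan} or \cite{james} rather than reprove it here; with it in hand the theorem follows immediately.
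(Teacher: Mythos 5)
Your proof is correct and is exactly the standard argument: additivity of characters over direct sums (via a block-diagonal basis adapted to the decomposition), linearity of the inner product in the first slot, and the first orthogonality relation $\langle \chi_j, \chi_i \rangle = \delta_{ij}$. The paper states this theorem without proof in its review chapter and records the orthonormality of irreducible characters immediately afterward, deferring to \cite{sagan} just as you do, so there is no divergence to report.
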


It also can be shown that irreducible characters are orthonormal with
respect to this inner product.  Using this fact, it is possible to
show that the set of irreducible characters forms a basis for the
space of all class functions on $G$.  As the dimension of this space
is equal to the number of conjugacy classes of $G$, we have the
following theorem:

\begin{theorem} \label{rep-class}
  The number of irreducible representations of $G$ is equal to the
  number of conjugacy classes of $G$.
\end{theorem}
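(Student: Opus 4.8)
The plan is to package the three ingredients the preceding discussion has flagged---orthonormality of the irreducible characters, the fact that they span the space of class functions, and a dimension count---into a clean basis argument, and then to translate the resulting equality of cardinalities back into the language of representations.

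First I would pin down the space in question. Let $\mathrm{cf}(G)$ denote the complex vector space of class functions $f : G \to \C$. Because such an $f$ is by definition constant on each conjugacy class, it is completely determined by the single value it takes on each class; conversely, any assignment of values to the classes extends to a class function. Hence the indicator functions of the conjugacy classes form a basis of $\mathrm{cf}(G)$, and $\dim \mathrm{cf}(G)$ equals the number $r$ of conjugacy classes of $G$.

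Next I would show that the distinct irreducible characters $\chi_1, \dots, \chi_k$ form a basis of $\mathrm{cf}(G)$. Linear independence is immediate from the orthonormality relations $\langle \chi_i, \chi_j \rangle = \delta_{ij}$ noted above: any vanishing linear combination, paired against $\chi_j$ in the inner product, forces the $j$th coefficient to be zero. The substantial step is spanning. For this I would suppose $f \in \mathrm{cf}(G)$ is orthogonal to every $\chi_i$ and argue that $f = \vec{0}$. Given an irreducible representation $\phi$ with character $\chi_i$, I would form the linear map
\[
  T = \sum_{g \in G} \overline{f(g)}\, \phi(g).
\]
A short computation using that $f$ is a class function shows $T$ commutes with every $\phi(h)$, so by Schur's Lemma $T$ is a scalar $\lambda I$; taking traces and using $\langle f, \chi_i \rangle = 0$ gives $\lambda = 0$, whence $T$ annihilates every irreducible module. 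By Maschke's Theorem the regular representation decomposes into irreducibles, so $T$ annihilates it as well; reading off the action of $T$ on the identity basis vector of the regular module then forces $\overline{f(g)} = 0$ for every $g$, so $f = \vec{0}$. Thus the $\chi_i$ span $\mathrm{cf}(G)$.

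Having both properties, the irreducible characters form an orthonormal basis of $\mathrm{cf}(G)$, so their number $k$ equals $\dim \mathrm{cf}(G) = r$. Finally, as recorded in Section~\ref{groupreps}, two representations share a character exactly when they are isomorphic, so isomorphism classes of irreducible representations correspond bijectively to the distinct irreducible characters; hence the number of irreducible representations is also $r$, as claimed. The main obstacle is the spanning step: the orthonormality and the dimension count are routine, but ruling out a nonzero class function orthogonal to all irreducible characters genuinely requires the Schur's Lemma argument on the regular representation, and care is needed there both to verify the commuting relation and to extract $f$ from the action of $T$.
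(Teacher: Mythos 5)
Your proof follows exactly the outline the paper sketches in the prose preceding the theorem---orthonormality of the irreducible characters, the fact that they form a basis for the space of class functions, and the dimension count via indicator functions of conjugacy classes---and it correctly fills in the one nontrivial step (spanning) with the standard Schur's Lemma argument on $T = \sum_{g} \overline{f(g)}\,\phi(g)$ applied to the regular representation. The argument is correct and is essentially the paper's intended proof, just carried out in full detail rather than cited.
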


Now, when decomposing a representation into irreducible submodules, it
may happen that some of these submodules are isomorphic to each other.
In this case, the decomposition is not unique; in fact, there are
infinitely many ways to write such a decomposition.  To remedy this
defect, we introduce the notion of an \term{isotypic} submodule, which
is simply the direct sum of one isomorphism class of irreducible
submodules of $V$.  In other words, given one irreducible submodule,
we collect together all the irreducible submodules isomorphic to it
into one larger subspace.  When this is done, the decomposition is in
fact unique.

\begin{theorem}
  If $V$ is a nontrivial $G$-module, then there is a decomposition
  \begin{equation*}
    V = W_1 \oplus \dots \oplus W_k
  \end{equation*}
  where $W_1, \dots, W_k$ are isotypic $G$-modules.  Furthermore, this
  decomposition is unique up to ordering.
\end{theorem}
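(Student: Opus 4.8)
The plan is to get existence directly from Maschke's Theorem, and then to establish uniqueness by giving an intrinsic, decomposition-free description of each isotypic summand.

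First I would prove existence. Maschke's Theorem decomposes $V$ into irreducible submodules; grouping together those that are isomorphic, we may write $V = m_1 W_1 \oplus \dots \oplus m_k W_k$, where the $W_i$ are pairwise nonisomorphic irreducibles and $m_i W_i$ denotes the direct sum of $m_i$ copies of $W_i$. Collecting the copies of each $W_i$ into a single submodule $U_i = m_i W_i$ gives $V = U_1 \oplus \dots \oplus U_k$ with each $U_i$ isotypic by construction, which is the desired decomposition.

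The substance of the theorem is uniqueness, and here the key idea is to characterize the summands without reference to any chosen decomposition. For an irreducible module $W$, let $U(W)$ denote the sum of all submodules $X \subseteq V$ with $X \cong W$; this subspace depends only on $V$ and on the isomorphism type of $W$. I would then show that in any isotypic decomposition $V = U_1 \oplus \dots \oplus U_k$, with $U_j$ of type $W_j$, one has $U_j = U(W_j)$. Since the right-hand sides are intrinsic, this forces the collection $\{U_1, \dots, U_k\}$ to be determined by $V$ alone, giving uniqueness up to ordering. (Theorem \ref{character-inner} already tells us the multiplicities $m_i$ are determined by the character of $V$, but that controls only the sizes of the summands, not the summands themselves, so it cannot by itself settle uniqueness.)

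The crux---and the main obstacle---is Schur's Lemma, which is what turns the intrinsic description into an equality. Given the decomposition $V = U_1 \oplus \dots \oplus U_k$, each projection $\pi_j \colon V \to U_j$ along the other summands is a $G$-module homomorphism, because each $U_j$ is $G$-invariant. For an irreducible submodule $X \cong W$, the restriction $\pi_j|_X \colon X \to U_j$ is a homomorphism from a copy of $W$ into a direct sum of copies of $W_j$; by Schur's Lemma such a map must be zero whenever $W \not\cong W_j$. Hence $X$ is annihilated by every projection except the one whose type matches $W$, so $X$ lies entirely in that summand. Applying this to every irreducible submodule of type $W_j$ shows $U(W_j) \subseteq U_j$, and the reverse inclusion is immediate since $U_j$ is itself a sum of copies of $W_j$. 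This yields $U_j = U(W_j)$ and completes the argument.
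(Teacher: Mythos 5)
Your proof is correct. The paper states this theorem without proof---it appears in the background chapter on representation theory, with the reader referred to \cite{sagan} for details---so there is no in-paper argument to compare against. Your argument is the standard one: Maschke's Theorem plus grouping for existence, and Schur's Lemma applied to the projections $\pi_j$ to show each summand coincides with the intrinsic subspace $U(W_j)$, which settles uniqueness. The one step you compress is the claim that a homomorphism from an irreducible $X$ into a direct sum of copies of $W_j$ vanishes when $X \not\cong W_j$; strictly this needs one further composition with the coordinate projections of $U_j$ onto its individual copies of $W_j$ before Schur's Lemma applies, but that is routine and does not affect the correctness of the argument. Note also that your identification $U_j = U(W_j)$ is doing double duty under the paper's definition of ``isotypic'' (the sum of \emph{all} submodules of a given isomorphism type): it verifies that the grouped summands really are isotypic in that sense, not just that the decomposition is unique.
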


\section{Examples} \label{repexamples}

Now let us see some examples of representations.

\begin{example}[The permutation representation]
  Suppose $G$ acts on a finite set $S$ with $n$ elements.  Let $\C S$
  be the set of all formal linear combinations $\sum_{i=1}^n a_i s_i$,
  where $a_i \in \C$, $s_i \in S$.  With componentwise addition and
  scalar multiplication, $\C S$ becomes a vector space.  Then we can
  make $\C S$ into a $G$-module by defining
  \begin{equation*}
    g \sum_{i=1}^n a_i s_i = \sum_{i=1}^n a_i (g s_i).
  \end{equation*}
  This is called the \term{permutation representation} of $G$
  corresponding to its action on $S$.
\end{example}

\begin{example}[The regular representation]
  If, in the previous example, we consider $G$ acting on itself by
  left multiplication, we obtain the \term{regular representation} $\C
  G$.

  Now an element of $G$ becomes a linear transformation on $\C G$.
  Then each element of $\C G$ is just a linear combination of linear
  transformations, which is again a linear transformation.  Hence each
  element of $\C G$ is itself a linear transformation of $\C G$, as
  follows:
  \begin{equation*}
    \left( \sum_i a_i g_i \right) \left(\sum_j b_j h_j \right) =
    \sum_i \sum_j a_i b_j g_i h_j.
  \end{equation*}
  It's easy to show that this puts a multiplicative structure on $\C
  G$, and for this reason $\C G$ is also called the \term{group
  algebra} or \term{group ring} of $G$.
\end{example}

The same extension works for any $G$-module $V$.  Since each element
of $G$ is a linear transformation of $V$, so is any element of $\C G$,
since a linear combination of linear transformations is again a linear
transformation:
\begin{equation*}
  \left( \sum_i a_i g_i \right) \vec{v} = \sum_i a_i (g_i \vec{v}).
\end{equation*}
For this reason, many authors prefer to think of $V$ as actually being
acted on by $\C G$ (since this action also respects the ring structure
of $\C G$), and call it instead a $\C G$-module.

The regular representation $\C G$ has the important property that it
contains \emph{every} irreducible representation.  In fact, if $\C G$
is written as a direct sum of irreducible submodules, then each
irreducible representation $W$ appears $\dim W$ times.  This yields
the identity
\begin{equation} \label{dimsum}
 \abs{G} = \dim \C G = \sum (\dim W)^2
\end{equation}
where the sum is taken over all non-isomorphic irreducible
representations $W$.

The regular representation can also be viewed as the set of all
functions $f : G \to \C$, with pointwise addition and scalar
multiplication, and the group action $(gf)(a) = f(g^{-1}a)$ for $g, a
\in G$.  This can be a useful formulation for signal processing, where
we may think of an element of $\C G$ as a signal on $\abs{G}$ points.

\section{Tensor products of representations} \label{tensor}

The tensor product allows us to construct representations of direct
products of groups.  We describe it in terms of matrices, but as we
saw in Section \ref{groupreps}, we could also describe it in terms of
$G$-modules; the two formulations are completely equivalent.  This
material comes directly from \cite{sagan} and is included here mainly
for later reference.

\begin{definition}
  Let $A = (a_{ij})$ and $B$ be matrices.  Their \term{tensor product}
  is the block matrix
  \begin{equation}
    A \tensor B = (a_{ij}B) = 
    \begin{pmatrix}
      a_{11}B & a_{12}B & \cdots \\
      a_{21}B & a_{22}B & \cdots \\
      \vdots & \vdots & \ddots
    \end{pmatrix}.
  \end{equation}
\end{definition}

Now let $G$ and $H$ be groups, with representations $\rho : G \to
GL(\C^n)$ and $\phi : H \to GL(\C^m)$ respectively.  Then their tensor
product $\rho \tensor \phi : G \times H \to \C^{nm}$, where we define
$(\rho \tensor \phi)(g,h) = \rho(g) \tensor \phi(h)$, is a
representation of $G \times H$.  It can be shown \cite{sagan} that if
$\rho$ and $\phi$ are irreducible, then so is $\rho \tensor \phi$.
Thus the representations of a direct product of two groups are
completely determined by the representations of the factors.

\section{Induced and restricted representations} \label{induce-restrict}

It is natural to ask how the subgroup structure of a group influences
its representations.  In fact, if we have $H \subgroup G$, we can
construct representations of $G$ from those of $H$, and vice versa.
We again use the matrix formulation of a representation.  This
material also comes from \cite{sagan}.

\begin{definition}
  Suppose $H \subgroup G$, and $\rho : H \to GL(\C^n)$ is a
  representation of $H$.  Let $t_1, \dots, t_k$ be a set of
  representatives for the cosets of $H$ in $G$ (where
  $k=\abs{G}/\abs{H}$).  Then the \term{induced representation} $\rho
  \uparrow_H^G : G \to GL(\C^{nk})$ maps each $g \in G$ to the block
  matrix
  \begin{equation}
    \rho \uparrow_H^G (g) =
    \begin{pmatrix}
      \rho(t_1^{-1} g t_1) & \rho(t_1^{-1} g t_2) & \cdots &
      \rho(t_1^{-1} g t_k) \\
      
      \rho(t_2^{-1} g t_1) & \rho(t_2^{-1} g t_2) & \cdots &
      \rho(t_2^{-1} g t_k) \\

      \vdots & \vdots & \ddots & \vdots \\

      \rho(t_k^{-1} g t_1) & \rho(t_k^{-1} g t_2) & \cdots &
      \rho(t_k^{-1} g t_k) \\
      
    \end{pmatrix}
  \end{equation}
  where $\rho(x)=0$ for $x \notin H$.
\end{definition}

It is shown in \cite{sagan} that this actually yields a representation
of $G$, and that any two choices of coset representatives yield
isomorphic representations, so that the induced representation is
well-defined.

The other direction is much simpler: given a representation $\rho$ for
$G$, we can produce the \term{restricted representation} $\rho
\downarrow^G_H$ of $H$ simply by taking the restriction of the map
$\rho$ to $H$.  It is obvious that this remains a representation.

We should note that the induced or restricted representations of an
irreducible representation are \emph{not} necessarily themselves
irreducible.  See \cite{clifford} for more details on when this is
true.

\section{Representation theory and the Fourier transform} \label{fourier}

Consider the case where $G = Z_n$ is the cyclic group of order $n$.
Then $\C G$ consists of $n$-dimensional complex vectors, and the
action of $G$ cyclically permutes the components.  Its irreducible
submodules are all one-dimensional (this always happens for abelian
groups \cite{dummit}), so they will be spaces of vectors which are
only scaled when their components are cyclically permuted. One such
submodule is that in which all components are equal.  Others are given
by vectors whose components vary in some sense periodically.  In fact,
each irreducible submodule is spanned by a vector of the form
\begin{equation} \label{fourier-basis}
  v_k = (1, e^{2 \pi i k / n}, e^{4 \pi i k / n}, \dots, e^{2(n-1)\pi
  i k / n})
\end{equation}
Also, since these submodules are non-isomorphic, they are in fact the
isotypic submodules of $\C G$.

So by decomposing a vector into components lying in these subspaces,
we break it into parts that look like complex exponentials.  If we
think of vectors in $\C G$ as functions $f : G \to \C$ (where the $n$
elements of $G$ can be thought of as $n$ discrete points in time),
then this looks very much like a discrete Fourier transform.  In fact,
expressing a vector in the basis $\{v_k\}_{k=1}^n$ yields the
coefficients of the classical discrete Fourier transform on $n$
points.

If we consider the group algebra as functions on the $n$ elements of
$Z_n$, then isotypic submodules will consist of functions whose values
change only by a (complex) scalar when their domains are cycled.

This notion extends to arbitrary groups $G$ (\cite{clausen},
\cite{maslen:cooley-tukey}).  Although for nonabelian groups the
isotypics will not all be $1$-dimensional, projections onto isotypic
subspaces of $\C G$ (or another $G$-module) can still yield important
information about the original vector.  To give just one example, the
case $G = S_n$ has been exploited to analyze ranked data
\cite{diaconis:rank}, such as survey results and voter preferences, in
much the same way as the case $G = Z_n$ is used to analyze time-series
data.  One particularly interesting application uses these techniques
to analyze approval voting, detecting coalitions in judicial and
legislative bodies \cite{dave:thesis}.

The problem then becomes: how should we compute isotypic projections?
Obviously, since projection onto a subspace is a linear
transformation, it has a matrix representation, so we could just
compute the projection directly.  However, the cost of doing this is
that of multiplying an $n \times n$ matrix by a vector, which in
general requires $O(n^2)$ operations, since there is no reason why
this matrix should be particularly ``nice.''

A better approach comes from an algorithm described in \cite{summer}
and \cite{orrison:thesis}.  If we can find diagonalizable linear
operators whose eigenspaces correspond well with the isotypic
submodules we seek, then we can compute isotypic projections via
eigenspace projections.  The next chapter describes how we go about
this search.

\chapter{Eigenspace Approaches to Isotypic Decomposition} \label{eigen}

As mentioned previously, isotypic projections can be computed via
eigenspace projections, given an appropriate set of linear operators.
This chapter describes the process.

\section{Separating sets} \label{separating-sets}

Let us precisely state the properties we seek in our operators.

\begin{definition}
  Let $V$ be a $G$-module which decomposes into isotypic submodules as
  $V = W_1 \oplus \dots \oplus W_k$.  A \term{separating set} for $V$
  is a set $S = \{A_1, \dots, A_m : V \to V\}$ of simultaneously
  diagonalizable linear operators on $V$ satisfying the following:
  For each isotypic submodule $W_j$ there exists a subset $S_j =
  \{A_{i_1}, A_{i_2}, \dots\} \subset S$, and a corresponding set of
  eigenspaces $\{E_{i_1}, E_{i_2}, \dots\}$, where $E_{i_1}$ is an
  eigenspace of $A_{i_1}$, and so on.  This set has the property that
  \begin{equation}
    W_j = E_{i_1} \cap E_{i_2} \cap \dots .
  \end{equation}
  That is, each isotypic can be written as an intersection of
  eigenspaces of some of the operators $A_1, \dots, A_m$.
\end{definition}

It should be clear that a separating set suffices to compute isotypic
projections.  For if $W_j = E_{i_1} \cap E_{i_2} \cap \dots$, then to
project $\vec{v}$ onto $W_j$, we need simply project it onto $E_{i_1}$
(Section \ref{projections} discusses how this can be done), project
the result onto $E_{i_2}$, and so on until we have iteratively
projected onto each eigenspace.  Then what we have is a projection
onto their intersection; namely, $W_j$.

Separating sets are considered at length in \cite{orrison:thesis}, in
which examples are given for several classes of groups.

\section{Conjugacy classes} \label{conjugacy-classes}

One particularly nice separating set for any group comes from its
conjugacy classes.  Let $C$ be a conjugacy class of $G$, and let $V$
be a $G$-module.  As each $g \in C$ is a linear operator on $V$, so is
their sum; namely, the map
\begin{equation*}
  \vec{v} \mapsto \sum_{g \in C} g \vec{v}.
\end{equation*}
This operator is called the \term{class sum} of $C$.

It can be shown (see for instance \cite{orrison:thesis}) that all
these operators are simultaneously diagonalizable, and that every
irreducible submodule $W \subset V$ is contained in an eigenspace of
the class sum of each $C$.  Furthermore, if $W$ has character $\chi$,
then the corresponding eigenvalue is given by
\begin{equation}
  \lambda(C,W) = \abs{C} \frac{\chi(C)}{\dim W}
\end{equation}
where by $\chi(C)$ we mean the value of $\chi$ at any element of
$C$ (recall that characters are class functions, so it does not matter
which element is used).

Notice that isomorphic irreducible submodules get the same eigenvalue,
and hence reside in the same eigenspace.  Thus, since an isotypic
submodule is a direct sum of isomorphic irreducible submodules, each
isotypic submodule also lies in an eigenspace of a class sum.

Thus, to build a separating set $S$ out of class sums, we only require
that for every pair $W_1, W_2$ of irreducibles, $S$ contains some
class sum $c$ whose conjugacy class $C$ has $\lambda(C, W_1) \ne
\lambda(C, W_2)$; that is, that $W_1$ and $W_2$ lie in distinct
eigenspaces of $c$.  If this is so, then when all eigenspaces
containing some $W$ are intersected, no other irreducible $W'$ can lie
in that intersection, since for some class sum $W$ and $W'$ are in
distinct eigenspaces.  It can be shown (see for instance
\cite{orrison:thesis}) that the set of \emph{all} class sums is
sufficient to form a separating set.  However, in many important cases
not all of them are actually needed, and a much smaller subset
suffices.

Notice, in fact, that the above condition is equivalent to having some
$C$ with $\frac{\chi_i(C)}{\dim W_i} \ne \frac{\chi_j(C)}{\dim W_j}$
for each $W_i, W_j$ (as the $\abs{C}$'s cancel).  And furthermore,
since the identity $\iota$ of the group must correspond to the
identity transformation on $V$, we have $\dim W = \chi(\iota)$ for
every representation $W$.  Thus these eigenvalues may be computed by
simply examining $\chi(C)$ for each class sum $C$ and irreducible
character $\chi$.  These are given by the \term{character table} of
$G$: if $G$ has irreducible characters $\chi_1, \dots, \chi_k$ and
conjugacy classes $C_1, \dots, C_k$, the character table is the $k
\times k$ matrix whose $ij$th entry is $a_{ij} = \chi_i(C_j)$.  The
order chosen for the characters and conjugacy classes is unspecified,
but usually $C_1$ is the conjugacy class of the identity.  As such,
given a character table, our eigenvalues appear as the entries of a
\term{modified character table} whose $ij$th entry is $b_{ij} = a_{ij}
/ a_{i1}$.

Now, it might appear that this makes the problem of finding a
separating set rather easy: all we have to do is generate the modified
character table, and search for a set of columns (conjugacy classes)
such that for every pair of rows (irreducibles), there is a column in
the set in which those two rows have different entries.  The number of
projections required is certainly related to the number of class sums
used, so it is reasonable to look for a separating set which is as
small as possible.  (Note, however, that the smallest separating set
does not always yield the fastest projections; see Section
\ref{cyclic-example} for an example.)  Unfortunately, we have shown
that finding a minimum-size separating set of class sums from the
modified character table is an \term{NP-complete} problem; there is
probably no algorithm to do this in polynomial time in the size of
this table.  For a further explanation and a proof of this fact, see
Appendix \ref{reduction}.

However, it is possible to approximate this problem rather well, if we
require only a near-optimal solution.  We could use the following
\term{greedy algorithm}: start by taking the conjugacy class that
distinguishes the most pairs of irreducibles.  If some pairs remain
undistinguished, take the class that distinguishes the most of the
remaining pairs.  Repeat this until all pairs are distinguished.

This greedy algorithm certainly runs in polynomial time.  It may be
possible to show that the set thus obtained is in some sense ``close''
to the size of a smallest set, thereby placing bounds on how
accurately the greedy algorithm approximates an optimal solution.  See
Appendix \ref{reduction} for further details.

\section{Isotypic projections via eigenspaces} \label{projections}

Suppose, then, that we want to compute the projections of a vector
$\vec{v}$ of dimension $n$ onto the $k$ eigenspaces of a
diagonalizable matrix $A$.  The naive approach is just to compute the
matrix of each projection operation (which is a linear transformation)
and multiply it by $\vec{v}$.  But the projection matrix may be
arbitrarily complicated, hence multiplying it by a vector requires
$O(n^2)$ operations.  If we have $k$ projections to compute, we need a
total of $O(kn^2)$ operations.  When $n$ is large (and for our
purposes it is), this is prohibitive.

However, there is an algorithm that can take advantage of nice
structure in $A$.  If $A$ is a general matrix, then multiplying it by
an arbitrary vector takes $O(n^2)$ operations.  But perhaps $A$ is
sparse, or block diagonal, or factors into smaller matrices.  In this
case, it can be multiplied by an arbitrary vector using fewer
operations.  We use $A^\op$ to denote this number of operations.

\begin{theorem} \label{arnoldi}
  Given a vector $\vec{v}$ of dimension $n$ and a diagonalizable $n
  \times n$ matrix $A$, the projections of $\vec{v}$ onto the
  $k$ eigenspaces of $A$ can be computed with $O(kA^\op + k^2 n)$
  operations.
\end{theorem}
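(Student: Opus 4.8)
The plan is to exploit the fact that, since $A$ is diagonalizable, each eigenspace projection is a \emph{polynomial} in $A$, and then to evaluate all of these polynomials cheaply by reusing a single Krylov sequence, so that the costly applications of $A$ are incurred only once.

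First I would fix notation: let $\lambda_1, \dots, \lambda_k$ be the distinct eigenvalues of $A$, with eigenspaces $E_1, \dots, E_k$, so that $\C^n = E_1 \oplus \dots \oplus E_k$. For each $i$ I would introduce the Lagrange-type polynomial
\begin{equation*}
  p_i(x) = \prod_{j \ne i} \frac{x - \lambda_j}{\lambda_i - \lambda_j},
\end{equation*}
which has degree $k-1$, and claim that $P_i := p_i(A)$ is exactly the projection onto $E_i$ along the remaining eigenspaces. To verify this I would check the action on eigenvectors: if $A\vec{w} = \lambda_m\vec{w}$ then $p_i(A)\vec{w} = p_i(\lambda_m)\vec{w}$, and $p_i(\lambda_m) = \delta_{im}$ by construction. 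Since the eigenvectors span $\C^n$, this pins down $P_i$ as the desired projection, with $P_i$ acting as the identity on $E_i$ and as zero on every other eigenspace.

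The heart of the statement is the cost analysis. I would first build the Krylov sequence $\vec{v}, A\vec{v}, A^2\vec{v}, \dots, A^{k-1}\vec{v}$ by repeated multiplication, using one matrix-vector product at each step: this is $k-1$ products at a cost of $A^\op$ each, for $O(kA^\op)$ operations total. The coefficients of each $p_i$ depend only on the eigenvalues and can be expanded in time polynomial in $k$ alone (hence independent of $n$); since we may assume $k \le n$, this precomputation is dominated by the remaining work. Finally, each projection $P_i\vec{v} = \sum_{\ell=0}^{k-1} c_{i\ell}\, A^\ell\vec{v}$ is a linear combination of the $k$ stored Krylov vectors, each of dimension $n$, costing $O(kn)$ operations; carrying this out for all $k$ eigenspaces gives $O(k^2 n)$. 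Summing the two contributions yields the claimed $O(kA^\op + k^2 n)$.

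The point requiring the most care is the bookkeeping that keeps the two regimes separate: the expensive applications of $A$ happen only $k-1$ times because the \emph{same} Krylov sequence serves every projection, whereas the per-projection assembly touches all $n$ coordinates but involves no further applications of $A$. I would also remark explicitly that the eigenvalues $\lambda_i$ are taken to be available in advance — in our setting they are read off from the (modified) character table of Section~\ref{conjugacy-classes} — so that forming the $p_i$ is a one-time precomputation that is not charged against $n$. With that understood, the estimate follows directly from the two cost counts above.
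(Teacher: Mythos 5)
Your argument is correct, and it is worth noting that the paper itself does not actually prove this theorem: it defers entirely to an Arnoldi-iteration-based algorithm described in the cited reference. Your route is genuinely different and more elementary. You observe that each spectral projection is the Lagrange polynomial $p_i(A)$ evaluated on $A$, build the single Krylov sequence $\vec{v}, A\vec{v}, \dots, A^{k-1}\vec{v}$ at cost $O(kA^\op)$, and then assemble each $P_i\vec{v}$ as a linear combination of those $k$ stored vectors at cost $O(kn)$ apiece, for $O(k^2n)$ in all; the $O(k^3)$ precomputation of the coefficients is absorbed since $k \le n$. In exact arithmetic this is a complete and self-contained proof of the stated operation count, and it has the virtue of making transparent exactly where each of the two terms in the bound comes from. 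What the Arnoldi-based formulation buys instead is numerical robustness: it spends its $O(k^2n)$ budget orthogonalizing the Krylov basis rather than on raw linear combinations, which matters in floating point because the unorthogonalized powers $A^\ell\vec{v}$ rapidly align with a dominant eigendirection and the resulting (Vandermonde-type) combination problem becomes severely ill-conditioned. Your explicit caveat that the eigenvalues are known in advance is also the right thing to flag --- the theorem is silent on this, but in the intended application they are read off the modified character table, and without them neither approach can identify which eigenspace is which. So: same asymptotic accounting, same two-regime structure (few applications of $A$, many cheap length-$n$ combinations), but a more elementary mechanism that trades numerical stability for transparency.
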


The algorithm for this is based on a technique called the Arnoldi
iteration.  A description of the algorithm with a view to this
application can be found in \cite{summer}.

In our case, we will usually have $A^\op = O(n)$, and $k \ll n$, so
this will allow us to do projections with $O(n)$ operations.

\section{Example: Isotypic projections for the cyclic group} \label{cyclic-example}

Let us consider an example of the eigenspace method in action, in the
case of the cyclic group.  The isotypic projections we recover will
correspond to the coefficients of the discrete Fourier transform, as
described in Section \ref{fourier}.  As Fourier transform algorithms
often do, we restrict ourselves to the case where the number of
``points'' is a power of $2$.

Let $G = Z_{2^n} = \{z^0, z^1, \dots, z^{2^n-1}\}$ be the cyclic group
of order $2^n$ with generator $z$, and consider the regular
representation $\C G$.  As we saw in Section \ref{repexamples}, we can
view this as the space of functions $f : G \to \C$, which we can think
of as \term{signals} on $2^n$ points corresponding to the elements of
$G$ (in order).  In applications, this might correspond to some sort
of time-series data sampled at $2^n$ equally spaced points in time, so
we will write the elements of $\C G$ as complex $2^n$-tuples.

Since $G$ is abelian, each element is its own conjugacy class.  So by
Theorem \ref{rep-class} there are $2^n$ distinct irreducible
representations, all of which are contained in $\C G$.  It follows
that each irreducible representation has dimension $1$, and appears
only once in the decomposition of $\C G$, so in this case the isotypic
submodules of $\C G$ are exactly the irreducible submodules.

Now each element of $G$ is its own class sum, and hence a candidate
for inclusion in a separating set.  (Notice also that its matrix
representation is a $2^n \times 2^n$ permutation matrix, so in this
case $A^\op = O(2^n)$).  In fact, in this case the smallest separating
set of class sums has only one element!  The eigenspaces of the linear
transformation $z^1$ (the generator of $G$) are precisely the
irreducibles.  Thus, if we fix $\omega$ as a primitive $2^n$th root of
unity, it is not difficult to see that the eigenvalues of $z^1$ are
\begin{equation}
  \lambda_j = \omega^j
\end{equation}
and have corresponding eigenspaces
\begin{equation}
  E_j = \spanop \: \{ (1, \omega^j, \omega^{2j}, \dots,
  \omega^{(2^n-1)j}) \},
\end{equation}
for $j = 0, \dots, 2^n-1$.  Comparing (\ref{fourier-basis}), we see
that these eigenspaces are in fact the isotypic submodules of $\C G$.

However, this tiny (one element) separating set is not ideal for our
purposes.  Recall from Theorem \ref{arnoldi} that if there are $k$
eigenspaces, the time required to compute eigenspace projections is of
order $k^2$, and for this element, we have $k = 2^n$.  We can get
better efficiency by choosing more elements with fewer eigenspaces
each.

Let us consider instead the element $z^{2^{n-1}}$, which essentially
interchanges the first and last ``halves'' of the coordinates of a
vector.  It has eigenvalues $1$ and $-1$, corresponding to eigenspaces
$E_1, E_2$ where the last half is equal to or the negative of the
first half.  Computing these projections thus takes $O(2^n)$
operations.  Now consider the element $z^{2^{n-2}}$.  It has
eigenvalues $1, i, -1, -i$ (where $i = \sqrt{-1}$), and hence four
eigenspaces.  However, when we restrict $z^{2^{n-2}}$ to $E_1$, we
find that the restriction has only two eigenspaces of its own; the
same happens with $E_2$.  Furthermore, $E_1$ and $E_2$ each have
dimension $2^{n-1}$; if we do our computations in these spaces (with
an appropriate change of basis
\footnote{A significant amount of work has been swept under the rug
here.  However, it can be shown that an appropriate change of basis
can always be computed quickly.  See \cite{summer} for details.}), we
can project onto eigenspaces of the element $z^{2^{n-2}}$ with only
$O(2^{n-1})$ operations for each of $E_1, E_2$; again requiring a
total of $O(2^n)$ operations.  We have now split $\C G$ into $4$
eigenspaces.  We repeat the process with $z^{2^{n-3}}$; now we work in
each of $4$ eigenspaces, and require a total of $O(2^n)$ operations.
Continuing the process until we reach the element $z^1$, we find that
each of our $n$ steps has required $O(2^n)$ operations, for a grand
total of $O(n2^n)$.  This may seem high, but in terms of the number of
points $m=2^n$, this is only $O(m \log m)$ operations.  In fact, what
we have described is in essence an algorithm for the fast Fourier
transform.  It is equivalent \cite{orrison:thesis} to the so-called
Gentleman-Sande FFT \cite{gentleman-sande}.

By using a divide-and-conquer approach, we can achieve the same
results in far less time.  This is a common theme in Fourier transform
algorithms.

We observe in passing that that our ``better'' separating set actually
contains the first one!  The key is that we use it last, after the
space is already mostly decomposed, rather than trying to use its full
power right at the beginning of the process.  This demonstrates that
in considering a separating set, we must also consider the order in
which the elements are to be applied.  Had we used our ``better''
separating set in the reverse order, it would have been no improvement
at all.

\section{Jucys-Murphy elements} \label{jucys-murphy}

In the example of Section \ref{cyclic-example}, we saw a separating
set for which the intersections of the eigenspaces were exactly the
isotypic submodules we sought.  We do not actually need the full
strength of this condition.  It is perfectly all right for a
separating set to decompose the space more finely than the isotypics.
In particular, it suffices that the intersections of the eigenspaces
are merely all \emph{contained} in the isotypic submodules, since if
this holds, we can compute our eigenspace projections and merely add
up all the projections which lie in a single isotypic submodule.

As we noted in Section \ref{conjugacy-classes}, this will never be
necessary when our separating set consists of class sums.  However,
there are other possibilities.  For instance, we could intersect
conjugacy classes with subgroups of our group $G$, and take our
elements to be the sums of the resulting sets.  In the symmetric group
$S_n$, a particularly nice set of this kind is supplied by the
so-called Jucys-Murphy elements.

\begin{definition}
  For $2 \le j \le n$, the $j$th \term{Jucys-Murphy element} of $\C
  S_n$ is given by the sum of transpositions
  \begin{equation}
    R_j = (1\:j) + (2\:j) + \dots + ((j-1) \: j).
  \end{equation}
\end{definition}

Recalling \cite{dummit} that two elements of $S_n$ are conjugate if
and only if they have the same cycle type, we see that the set of all
transpositions in $S_n$ form a conjugacy class $K_{(2)}$.  Furthermore,
for $j \le n$, we have $S_j \le S_n$ in a very natural way (if $S_n$
is the group of permutations of $\{1, \dots, n\}$, then consider $S_j$
as the subgroup consisting of permutations which fix $j+1, j+2, \dots,
n$).  Then $R_j$ is simply the sum of the subset $(K_{(2)} \cap S_j) -
S_{j-1}$ of $S_n$.

It can be shown \cite{orrison:thesis} that we can get the separating
set we desire by taking all of the Jucys-Murphy elements; namely, the
set $\{R_j \where{2 \le j \le n}\}$.  Moreover, their matrix
representations in the standard basis for $\C S_n$ are quite simple:
there are only $j$ nonzero entries in each row and column (and these
are $1$s).  As $j \le n \ll n! = \dim \C S_n$, these matrices are
computationally very inexpensive to multiply, which is desirable in
view of Theorem \ref{arnoldi}.

A further, extremely useful property of the Jucys-Murphy elements
appears when we consider them as acting on $\C S_n$ not only by
\emph{left} multiplication, but also by \emph{right} multiplication.
Then the right action of $R_j$ gives rise to a different linear
transformation on $\C S_n$, which we may call $R_j'$.  As mentioned by
\cite{orrison:thesis} (with reference to \cite{drozd} and
\cite{murphy}), if we include these right-acting elements in our set
(to obtain $\{R_j, R_j' \where{2 \le j \le n}\}$), the resulting
decomposition is so fine that all of the (nontrivial) eigenspace
intersections are $1$-dimensional.  As such, computing the projections
of a vector onto these intersections amounts to a change of
basis---much as the Fourier transform in the $Z_n$ case.  In fact,
what we recover is exactly the discrete Fourier transform on $S_n$.

Much work has been done on generalizing these elements to other
groups; see \cite{diaconis:murphy}, \cite{russian}, and \cite{ram}.
However, we are not aware of any analogue of the Jucys-Murphy elements
for the groups in which we shall be interested (see
Chapter~\ref{auto}).


\chapter{Automorphism Groups of Binary Trees} \label{auto}

\section{Binary trees} \label{bintree}

For the rest of this thesis, we shall be interested in the following
class of groups.

\begin{definition}
  $W_n$ is the group of all automorphisms (or symmetries) of a
  complete binary tree $T_n$ of height $n+1$.
\end{definition}

As seen in the following example, an automorphism of such a tree
corresponds to a permutation of its leaves, and this correspondence is
one-to-one.  In this sense, $W_n$ is isomorphic to a subgroup of the
symmetric group $S_{2^n}$.

\begin{example}
  Consider the following tree $T_3$:
  \begin{center}
    \begin{parsetree}
     ( .A. ( .B. ( .D. .1. .2. ) ( .E. .3. .4. ) )
      ( .C. ( .F. .5. .6. ) ( .G. .7. .8. ) ) )
    \end{parsetree}
  \end{center}
  We can see that the permutation $(1\:2)$ (written in cycle notation)
  corresponds to an automorphism of $T_3$, but that $(1\:3)$ does not.

  We can get automorphisms of $G$ by swapping the subtrees of any of
  the non-leaf nodes A--G, and all automorphisms can be obtained
  by composing these.  In fact, the group $W_3$ of all automorphisms
  of $T_3$ is generated by the elements $(1\:2)$, $(1\:3)(2\:4)$,
  $(1\:5)(2\:6)(3\:7)(4\:8)$, which correspond respectively to swaps at
  D, B and A.
\end{example}

The recursive structure of these groups is obvious.  In particular,
notice that $T_n$ consists of two copies of $T_{n-1}$ under a root
node.  Any automorphism of $T_n$ can be written as a product
(composition) of an automorphism of the left-hand $T_{n-1}$, an
automorphism of the right-hand $T_{n-1}$, and possibly a swap of the
two copies.  Thus we see that $\abs{W_n} = 2\abs{W_{n-1}}^2$, and
since $\abs{W_1}=2$, we have by induction that $\abs{W_n} =
2^{(2^n-1)}$.  This extremely rapid growth of the group with respect
to $n$ is the fundamental cause of computational difficulties: the
group is just too big.

\section{Wreath products} \label{wreath}

A nice description of $W_n$ can be given in terms of wreath products,
which we now define.

\begin{definition}
  Let $G$ be a finite group, and let $H \subgroup S_n$ be a
  permutation group.  Let $G^n = G \times \dots \times G$ ($n$ times)
  be the set of ordered $n$-tuples of elements of $G$.  The
  \term{wreath product} $G \wr H$ of $G$ with $H$ is the set $G^n
  \times H$ with the following multiplication:
  \begin{align}
    (g, \sigma)(h, \pi) &= (gh^\sigma, \sigma \pi)  \\
    &= ((g_1 h_{\sigma^{-1}(1)}, \dots, g_n h_{\sigma^{-1}(n)}),
    \sigma \pi)
  \end{align}
  where $g = (g_1, \dots, g_n), h = (h_1, \dots, h_n)$ are in $G^n$, and
  $\sigma$ and $\pi$ are in $H$.
\end{definition}

To understand this, imagine that the components of $h$ are ``twisted''
by $\sigma$ before being multiplied by $g$.

It is easy to show that $G \wr H$ is a group under this
multiplication.  It is also not hard to show that the wreath product
is associative, but generally not commutative.  Furthermore, it is
apparent that the wreath product is a semidirect product $G^n \rtimes
H$.

In our case, we take $G = W_{n-1}$ and $H = Z_2$.  Then $W_{n-1} \wr Z_2$
consists of two copies of $W_{n-1}$ which can be ``twisted'' together.
These copies of $W_{n-1}$ correspond to automorphisms of two copies of
$T_{n-1}$, and the twisting corresponds to the possibility of
interchanging the copies of $T_{n-1}$, as if they were subtrees of a
root node.  In fact, what we obtain is all automorphisms of $T_n$, and
we have $W_n = W_{n-1} \wr Z_2$.  Since $W_1 = Z_2$, we can write
\begin{equation}
  W_n = Z_2 \wr \dots \wr Z_2 \quad \text{($n$ times)}.
\end{equation}

In general, the group of all automorphisms of a complete regularly
branching $r$-ary tree of height $n+1$ is given by $S_r \wr \dots \wr
S_r$ ($n$ times).  By choosing at each step some subgroup of $S_r$, we
obtain a more restricted set of automorphisms.  \cite{foote:image1}
describes applications of the group $W_{n,r} = Z_r \wr \dots \wr Z_r$
($n$ times), with particular interest in the case $r=4$, in which a
``wreath product transform'' for image processing can be obtained.

\section{Representation theory} \label{wn-reptheory}

Given the recursive structure of $W_n$, it should come as no surprise
that its representations arise recursively.  This section follows a
construction from \cite{orrison:root}, which generalizes to wreath
products of arbitrary cyclic groups; a discussion of the
representation theory of wreath product groups in general may be found
in \cite{kerber1}.  The process of constructing representations of a
semidirect product is the purview of Clifford theory \cite{clifford};
a good source on the subject is \cite{karpilovsky}.  Tensor products
$\rho \tensor \phi$ of representations are defined in Section
\ref{tensor}; induced representations $\rho \uparrow_H^G$ are defined
in Section \ref{induce-restrict}.

We start with the irreducible representations $\{\rho_i\}$ of
$W_{n-1}$, and consider the normal subgroup $W_{n-1} \times W_{n-1}
\normal W_n$ which corresponds to automorphisms of $T_n$ which do not
swap the right and left subtrees of the root.  As shown in Section
\ref{tensor}, the irreducible representations of $W_{n-1} \times
W_{n-1}$ are of the form $\rho_i \tensor \rho_j$.

If $i=j$, then $\rho_i \tensor \rho_i$ is actually an irreducible
representation of $W_n$ which simply disregards the swap at the root.
To take this swap into account, we tensor an irreducible
representation of $Z_2$.  There are two of these---the trivial
representation $\phi_0$ and the alternating representation
$\phi_1$---and thus we obtain irreducible representations for $W_n$ of
the form $\rho_i \tensor \rho_i \tensor \phi_k$.

If $i \ne j$, then $\rho_i \tensor \rho_j$ is not a representation of
$W_n$.  However, since $W_{n-1} \times W_{n-1} \subgroup W_n$, we can
induce this representation of the former to a representation of the
latter, as described in Section \ref{induce-restrict}.  It can be
shown that the resulting representation $\rho_i \tensor \rho_j
\uparrow_{W_{n-1} \times W_{n-1}}^{W_n}$ of $W_n$ is irreducible.
Furthermore, the representations which arise from $\rho_i \tensor
\rho_j$ and $\rho_j \tensor \rho_i$ are isomorphic to one another, so
that the ordering of $i$ and $j$ can be ignored.

We summarize this construction in the following theorem, which is
proved in \cite{orrison:root}.

\begin{theorem} \label{repsthm}
  Suppose $\{\rho_i\}$ are all the irreducible representations of
  $W_{n-1}$.  Let $\phi_0$ and $\phi_1$ be the trivial and alternating
  representations of $Z_2$.  Then every irreducible representation of
  $W_n$ takes exactly one of the following forms:
  \begin{enumerate}
    \item $\rho_i \tensor \rho_i \tensor \phi_k$, or
    \item $\rho_i \tensor \rho_j \uparrow_{W_{n-1}\times
    W_{n-1}}^{W_n}$, for $i < j$.
  \end{enumerate}
\end{theorem}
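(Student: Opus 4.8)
The plan is to use Clifford theory, since $W_n = W_{n-1} \wr Z_2$ is a semidirect product with normal subgroup $N = W_{n-1} \times W_{n-1} \normal W_n$ and quotient $W_n / N \cong Z_2$. First I would pin down how the generator $\tau$ of $Z_2$ (the swap of the two subtrees at the root) acts on $N$: conjugation by $\tau$ interchanges the two factors of $N$, so at the level of representations it carries the irreducible $\rho_i \tensor \rho_j$ of $N$ (and these are \emph{all} the irreducibles of $N$, by Section \ref{tensor}) to $\rho_j \tensor \rho_i$. The whole argument then splits according to the orbit of $\rho_i \tensor \rho_j$ under this induced $Z_2$-action, which is exactly the dichotomy $i = j$ versus $i \ne j$ in the statement.

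In the case $i = j$, the representation $\rho_i \tensor \rho_i$ is fixed by $\tau$, so its inertia group (its stabilizer in $W_n$) is all of $W_n$. Here I would exhibit an explicit extension: on the space $V_i \tensor V_i$, where $V_i$ carries $\rho_i$, let $\tau$ act by the flip $\vec{v} \tensor \vec{w} \mapsto \vec{w} \tensor \vec{v}$, and check that this is compatible with the twisted multiplication of the wreath product and squares to the identity. By Clifford theory the extensions of $\rho_i \tensor \rho_i$ to $W_n$ are in bijection with the irreducible representations of the quotient $Z_2$, and are obtained by tensoring the chosen extension with $\phi_0$ and $\phi_1$. This produces exactly the representations $\rho_i \tensor \rho_i \tensor \phi_k$ of form (1), each irreducible because it restricts to the single irreducible $\rho_i \tensor \rho_i$ on $N$.

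In the case $i \ne j$, the swap carries $\rho_i \tensor \rho_j$ to the non-isomorphic irreducible $\rho_j \tensor \rho_i$, so the $Z_2$-orbit has size two and the inertia group collapses to $N$ itself. In this trivial-stabilizer situation Clifford theory guarantees that the induced representation $\rho_i \tensor \rho_j \uparrow_N^{W_n}$ is irreducible. Since inducing from $\rho_i \tensor \rho_j$ and from its conjugate $\rho_j \tensor \rho_i$ yields isomorphic $W_n$-modules, I would restrict to $i < j$, giving the representations of form (2) with no repetition.

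Finally I would settle completeness and distinctness. Clifford's theorem says every irreducible of $W_n$ restricts to $N$ as a sum over a single $Z_2$-orbit of $N$-irreducibles, so each must come from one of the two cases above; and a type (1) representation restricts to a \emph{single} $N$-irreducible while a type (2) representation restricts to a sum of \emph{two} non-isomorphic ones, so the two families never overlap, giving the ``exactly one'' in the statement. A clean bookkeeping check is the dimension identity $\abs{W_n} = \sum (\dim W)^2$ from equation (\ref{dimsum}), which confirms nothing is missed. The hard part will be the extension step in the case $i = j$: one must verify that $\rho_i \tensor \rho_i$ genuinely extends to $W_n$ (that the flip intertwines correctly with the twist) and then match the two extensions honestly with $\phi_0$ and $\phi_1$, rather than merely counting that there are two of them.
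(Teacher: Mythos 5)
Your proposal is correct and follows essentially the same route as the paper, which sketches exactly this Clifford-theoretic construction (tensor products on the normal subgroup $W_{n-1}\times W_{n-1}$, extension plus twist by $\phi_k$ when $i=j$, induction when $i\ne j$) and defers the detailed proof to \cite{orrison:root}. Your explicit handling of the extension step via the flip $\vec{v}\tensor\vec{w}\mapsto\vec{w}\tensor\vec{v}$, and the restriction-to-$N$ argument for why the two families are disjoint, correctly fill in the details the paper leaves to its reference.
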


This gives us a nice recursive way to index irreducible
representations of $W_n$, using labeled trees of height $n$.  For $W_1
\iso Z_2$, there are only two irreducible representations, the trivial
representation $\rho_0$ and the alternating representation $\rho_1$.
To these, we associate trees of height $1$, whose single node is
labeled $0$ for trivial or $1$ for alternating.  Otherwise, an
irreducible representation of $W_n$ is associated with a labeled tree
consisting of a root and two subtrees, each of which correspond to an
irreducible representation of $W_{n-1}$.  If the two subtrees are the
same, the root may be labeled with a $0$ or a $1$ (this corresponds to
the first case of Theorem \ref{repsthm}).  Otherwise, if they are
different, the root must be labeled $0$ (this corresponds to the
second case).  Notice that isomorphic trees yield isomorphic
representations.  \cite{orrison:root} calls these trees
\term{$2$-trees} (a special case of $r$-trees for iterated wreath
products of any $Z_r$), and we shall follow this terminology.

\begin{example}
The $2$-tree
\begin{center}
  \begin{parsetree}
    ( .0. ( .1. .0. .0. ) ( .0. .0. .1. ) )
  \end{parsetree}
\end{center}
corresponds to the following irreducible representation for $W_3$:
\begin{equation*}
  (\rho_0 \tensor \rho_0 \tensor \phi_1) \tensor
  (\rho_0 \tensor \rho_1 \uparrow_{W_1 \times W_1}^{W_2})
  \uparrow_{W_2 \times W_2}^{W_3}.
\end{equation*}
\end{example}

This bijection between $2$-trees and irreducible representations lets
us count the irreducible representations of $W_n$.  In fact, the
following recurrence is easy to see:

\begin{theorem} \label{irred-count}
  Let $k_n$ be the number of irreducible representations of $W_n$.
  Then $k_1 = 2$, and
  \begin{equation} \label{recurrence}
    k_{n+1} = 2k_n + \binom{k_n}{2} = \frac{k_n^2+3k_n}{2}.
  \end{equation}
\end{theorem}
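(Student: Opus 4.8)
The plan is to obtain the recurrence as an immediate bookkeeping consequence of Theorem~\ref{repsthm}, which I will read as a statement that the isomorphism classes of irreducible representations of $W_{n+1}$ are in bijection with a certain set of combinatorial data drawn from the irreducible representations of $W_n$. First I would dispose of the base case: since $W_1 \iso Z_2$ and the cyclic group of order two has exactly two irreducible representations (the trivial $\phi_0$ and the alternating $\phi_1$), we have $k_1 = 2$.

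For the inductive step, I would apply Theorem~\ref{repsthm} with the roles shifted so that $\{\rho_i\}$ denotes the full list of irreducible representations of $W_n$, of which there are $k_n$ by definition. The theorem guarantees that every irreducible representation of $W_{n+1}$ arises in exactly one of two mutually exclusive ways, so I can count each family separately and add. Representations of the first form, $\rho_i \tensor \rho_i \tensor \phi_k$, are indexed by a choice of $i$ from among $k_n$ possibilities together with a choice of $k \in \{0,1\}$, giving $2k_n$ of them. Representations of the second form, $\rho_i \tensor \rho_j \uparrow_{W_n \times W_n}^{W_{n+1}}$ with $i<j$, are indexed by unordered pairs of distinct indices, of which there are $\binom{k_n}{2}$. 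Summing gives
\begin{equation*}
  k_{n+1} = 2k_n + \binom{k_n}{2},
\end{equation*}
and expanding $\binom{k_n}{2} = k_n(k_n-1)/2$ and collecting terms yields the closed form $k_{n+1} = (k_n^2 + 3k_n)/2$.

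I do not expect any serious obstacle here, since the representation-theoretic content has already been packaged into Theorem~\ref{repsthm}; the proof is essentially a count. The one place warranting care is the disjointness and non-redundancy of the two families---that no irreducible representation is produced twice, that the two forms never coincide, and that distinct parameter choices (up to the symmetry $i \leftrightarrow j$ noted in the theorem) yield non-isomorphic representations. This is exactly what the phrase ``exactly one of the following forms'' asserts, and it is made concrete by the bijection with $2$-trees described after the theorem, so I would invoke that bijection rather than re-establish distinctness by hand. With that granted, the recurrence and its closed form follow by elementary algebra.
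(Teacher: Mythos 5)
Your proposal is correct and matches the paper's argument in substance: the paper counts the $2$-trees of height $n+1$ by the same two-case split (identical subtrees with a free root label giving $2k_n$, distinct unordered subtrees forcing root label $0$ giving $\binom{k_n}{2}$), relying on the same bijection with irreducible representations that you invoke. The only cosmetic difference is that the paper phrases the count in terms of $2$-trees while you count the two forms in Theorem~\ref{repsthm} directly.
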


\begin{proof}
  We count the $2$-trees of height $n+1$.  Given an $2$-tree, suppose
  its root is labeled with $a = 0$ or $1$, and the two subtrees of the
  root are the $2$-trees $A$ and $B$ of height $n$.  We have the
  following possibilities:
  \begin{enumerate}
    \item $A = B$; that is, the two subtrees are equivalent.  Then
    there are $k_n$ choices for the subtree $A=B$, and the root may be
    labeled with either a $0$ or a $1$.  This gives us $2k_n$
    possibilities.

    \item $A \ne B$.  The root is then forced to be labeled with a
    $0$.  Since order does not matter, there are
    $\binom{k_n}{2}$ choices for $A,B$.
  \end{enumerate}
  As these cases are disjoint and cover every $2$-tree of height
  $n+1$, (\ref{recurrence}) follows.
\end{proof}

We are not aware of any closed-form solution of this recurrence.
However, since for $n \ge 2$ we have $k_n > 3$, it follows that
$k_{n+1} < k_n^2$, and so $k_n \le 2^{2^{n-1}}$.  In fact, it seems
empirically that the growth is rather slower than this.


\section{Permutation representation and Haar wavelets} \label{haar}

As we saw, each element of $W_n$ induces a permutation on the $2^n$
leaves of $T_n$ (see Section \ref{bintree}).  This gives rise to a
group action of $W_n$ on the set of leaves $L$, and as discussed in
Section \ref{repexamples}, this action in turn gives rise to a
permutation representation of $W_n$.  The permutation representation
can be thought of as the vector space $\C L$ of linear combinations of
$L$, or alternatively as complex-valued functions $f : L \to \C$.  In
either case it can be viewed as a space of signals, and the structure
of the representation gives us a way to decompose these signals.  In
particular, we are interested in their isotypic projections.

It can be shown \cite{foote:image1} that these projections correspond
to the $1$-D discrete Haar wavelet transform of the signal.  This
transform essentially involves decomposing the signal as a sum of
smaller and smaller square waves.  One particular advantage of the
Haar wavelet transform is that it can very effectively ``zoom in'' on
short-term, transient parts of the signal, without losing the signal's
overall shape.  Further details can be found in \cite{wavelet}, which
discusses applications including compression and denoising of signals.

\section{Conjugacy classes} \label{wreath-conjugacy-classes}


In the course of this work, we found it useful to explicitly derive
some results about the conjugacy classes of $G \wr Z_2$, of which $W_n
= W_{n-1} \wr Z_2$ is a special case.  We record them here.

Let $G$ be a finite group with identity $\iota$ and $Z_2 = \{0,1\}$ be
the cyclic group of order $2$.  We write elements of $G \wr Z_2$ as
ordered triples $(a,b,z)$ where $a,b \in G$ and $z \in Z_2$.

For readers who prefer to think of trees, think of $G = W_{n-1}$.
Then $(a,b,z)$ corresponds to an automorphism of $T_n$ constructed as
follows:
\begin{enumerate}
  \item Apply the automorphism $a$ to the left-hand subtree of the
  root (this subtree is a copy of $T_{n-1}$).

  \item Apply the automorphism $b$ to the right-hand subtree.

  \item If $z$ is the generator of $Z_2$, exchange the two subtrees;
  if $z$ is the identity of $Z_2$, do nothing.
\end{enumerate}

Recall that in general for wreath product groups $G \wr H$ where $H
\subgroup S_n$, multiplication is given by $(a,\pi) \cdot (b,\sigma) =
(ab^\pi, \pi \sigma)$ (where $a,b \in G^n$ and $b^\pi$ denotes
permuting the ``coordinates'' of $b$ according to $\pi$).  It follows
that inverses are given by $(a, \pi)^{-1} = ((a^{-1})^{\pi^{-1}},
\pi^{-1})$, and conjugation by $(a,\pi)(b,\sigma)(a,\pi)^{-1} = (a
b^\pi (a^{-1})^\sigma, \pi \sigma \pi^{-1})$.  Notice that when $H$ is
abelian (as in our case), the last coordinate of an element is
unchanged by conjugation.

Let $\sim$ denote the conjugacy relation (i.e. $a \sim b$ if $a =
xbx^{-1}$ for some $x$).

\begin{proposition} \label{case0}
  $(a,b,0) \sim (c,d,0)$ if and only if $a \sim c$ and $b \sim d$, or
  $a \sim d$ and $b \sim c$.
\end{proposition}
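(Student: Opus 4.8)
The plan is to use the explicit conjugation formula for the wreath product, specialized to the triple notation, and simply read off the conjugacy class of $(a,b,0)$. First I would recall that conjugation in $G \wr Z_2$ is governed by the formula $(a,\pi)(b,\sigma)(a,\pi)^{-1} = (a\,b^\pi (a^{-1})^\sigma, \pi\sigma\pi^{-1})$ derived just above the statement, where now the ``coordinate'' tuples live in $G^2$ and the exponent denotes the swap action of $Z_2$: a pair $(x,y)$ is fixed by $0 \in Z_2$ and sent to $(y,x)$ by $1 \in Z_2$. Conjugating $(a,b,0)$ --- that is, the element with $\sigma = 0$ --- by an arbitrary $(p,q,s)$, the last coordinate stays $0$ (as already noted, since $Z_2$ is abelian), and because $\sigma = 0$ the factor $(g^{-1})^\sigma$ collapses to plain $g^{-1} = (p^{-1},q^{-1})$. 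The first coordinate is then the componentwise product $(p,q)\,(a,b)^s\,(p^{-1},q^{-1})$ in $G^2$.

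Next I would split on the value of $s$. When $s = 0$ the pair $(a,b)$ is untwisted, giving the conjugate $(pap^{-1}, qbq^{-1}, 0)$; when $s = 1$ it is swapped to $(b,a)$, giving $(pbp^{-1}, qaq^{-1}, 0)$. As $p$ and $q$ range independently over $G$, the first family realizes exactly the triples $(c,d,0)$ with $c \sim a$ and $d \sim b$, and the second exactly those with $c \sim b$ and $d \sim a$. Since the conjugacy class of $(a,b,0)$ is precisely the union of these two families --- every conjugate arises in this way, and every such triple is attained by a suitable choice of $p,q$ --- both directions of the equivalence fall out simultaneously: $(a,b,0) \sim (c,d,0)$ exactly when $a \sim c,\ b \sim d$ or $a \sim d,\ b \sim c$, after using the symmetry of $\sim$.

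There is no serious obstacle here; the computation is essentially bookkeeping, and the only point demanding care is the twisting action --- applying the $Z_2$-exponent to the correct factor and tracking which component is interchanged when $s = 1$. The feature that keeps things clean is that the element being conjugated has trivial $Z_2$-part, so the term $(g^{-1})^\sigma$ is never twisted; I would expect the companion case $(a,b,1)$ to require genuinely more work precisely because that simplification is lost.
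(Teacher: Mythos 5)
Your proof is correct and follows essentially the same route as the paper: you split on whether the conjugating element has $Z_2$-part $0$ or $1$, compute the conjugate as $(pap^{-1},qbq^{-1},0)$ or $(pbp^{-1},qaq^{-1},0)$ respectively, and observe that letting $p,q$ range over $G$ realizes exactly the two claimed families, which gives both directions at once (the paper phrases this last point as each step being reversible). The only cosmetic difference is that you invoke the general conjugation formula while the paper expands the products directly.
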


\begin{proof}
  Suppose $(a,b,0) \sim (c,d,0)$.  There are two cases:
  \begin{enumerate}
    \item $(x,y,0)(a,b,0)(x,y,0)^{-1} = (c,d,0)$.  Expanding,
    $(xax^{-1}, yby^{-1},0) = (c,d,0)$.  Thus $xax^{-1} = c$,
    $yby^{-1} = d$, and we have $a \sim c$ and $b \sim d$.

    \item $(x,y,1)(a,b,0)(x,y,1)^{-1} = (c,d,0)$.  Expanding,
    $(xbx^{-1}, yay^{-1}, 0) = (c,d,0)$.  Thus $xbx^{-1} = c$,
    $yay^{-1} = d$, and we have $b \sim c$ and $a \sim d$.
  \end{enumerate}
  Note that each step is reversible, so the converse is also
  established.
\end{proof}

\begin{proposition} \label{case1}
  $(a,\iota,1) \sim (c,d,1)$ if and only if $a \sim cd$.
\end{proposition}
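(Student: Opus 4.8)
The plan is to proceed by direct computation, exploiting the explicit multiplication and conjugation formulas for $G \wr Z_2$ in the triple notation $(a,b,z)$ established above. Since $Z_2$ is abelian, conjugation fixes the last coordinate, so every conjugate of $(a,\iota,1)$ again has last coordinate $1$; this is consistent with the claimed form $(c,d,1)$ and lets me focus entirely on the first two coordinates.

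First I would conjugate $(a,\iota,1)$ by an arbitrary element $(x,y,w)$, treating the two cases $w = 0$ and $w = 1$ separately. In each case the computation requires care with the coordinate swap that occurs whenever a factor with last coordinate $1$ is multiplied on the left, and with the inverse formula $(x,y,1)^{-1} = (y^{-1}, x^{-1}, 1)$. Carrying this out, I expect to find that for $w = 0$ the conjugate is $(xay^{-1}, yx^{-1}, 1)$, and for $w = 1$ it is $(xy^{-1}, yax^{-1}, 1)$. The key observation is that in both cases the product of the first two coordinates simplifies to $xax^{-1}$, a conjugate of $a$.

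This observation immediately gives the forward direction: if $(a,\iota,1) \sim (c,d,1)$, then $(c,d)$ equals one of these two coordinate pairs, so $cd = xax^{-1} \sim a$, whence $a \sim cd$. For the converse, suppose $a \sim cd$, say $gag^{-1} = cd$. I would then exhibit an explicit conjugating element using the $w = 0$ case: setting $x = g$ and $y = dg$, a short check shows $(xay^{-1}, yx^{-1}, 1) = (gag^{-1}d^{-1}, d, 1) = (c,d,1)$, since $gag^{-1} = cd$ forces $gag^{-1}d^{-1} = c$.

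The main obstacle is bookkeeping rather than conceptual: the coordinate swap induced by the $Z_2$-twist makes it easy to misplace an inverse or apply the swap to the wrong factor, so the two-case computation must be done carefully. Conceptually, the crux is recognizing that the product $cd$ of the two $G$-coordinates is the natural conjugacy invariant here, which both explains the statement and tells me exactly how to solve for the conjugating element in the converse.
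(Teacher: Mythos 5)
Your proposal is correct and follows essentially the same route as the paper's proof: the same two-case computation of the conjugates $(xay^{-1},yx^{-1},1)$ and $(xy^{-1},yax^{-1},1)$, the same observation that $cd = xax^{-1}$ in both cases, and the same explicit choice of conjugating element (your $x=g$, $y=dg$ is the paper's $x=z^{-1}$, $y=dz^{-1}$ under $g=z^{-1}$) for the converse.
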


\begin{proof}
  ($\Rightarrow$) Suppose $(a,\iota,1) \sim (c,d,1)$.  There are two
  cases:

  \begin{enumerate}
    \item $(x,y,0)(a,\iota,1)(x,y,0)^{-1} = (c,d,1)$.  Expanding,
    $(xay^{-1}, yx^{-1}, 1) = (c,d,1)$.  Thus $c = xay^{-1},
    d=yx^{-1}$, and then $cd=xax^{-1}$, so that $a \sim cd$.

    \item $(x,y,1)(a,\iota,1)(x,y,1)^{-1} = (c,d,1)$.  Expanding,
    $(xy^{-1}, yax^{-1}, 1) = (c,d,1)$.  Thus $c = xy^{-1}, d =
    yax^{-1}$, and then $cd=xax^{-1}$, so that again $a \sim cd$.
  \end{enumerate}

  ($\Leftarrow$) Suppose $a \sim cd$, so that $a = z(cd)z^{-1}$ for
  some $z \in G$.  Let $x=z^{-1}$, $y=dz^{-1}$.  Then
  \begin{align*}
    (x,y,0)(a,\iota,1)(x,y,0)^{-1} 
    &= (xay^{-1}, yx^{-1}, 1) \\
    &= ((z^{-1})(zcdz^{-1})(zd^{-1}), (dz^{-1})z, 1) \\
    &= (c, d, 1).
  \end{align*}
  Thus $(a,\iota,1) \sim (c,d,1)$.
\end{proof}

\begin{corollary} \label{class-reps}
  Every element of $G \wr Z_2$ is conjugate to an element of the form
  $(a,b,0)$ or $(a, \iota, 1)$ (and never both).
\end{corollary}

This gives us a way to count the conjugacy classes of $G \wr Z_2$.

\begin{proposition} \label{class-count}
  If $G$ has $k$ conjugacy classes, then $G \wr Z_2$ has $\binom{k}{2}
  + 2k$ conjugacy classes.
\end{proposition}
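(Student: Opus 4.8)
The plan is to invoke Corollary~\ref{class-reps}, which tells us that every conjugacy class of $G \wr Z_2$ has a representative of exactly one of the two forms $(a,b,0)$ or $(a,\iota,1)$. Because conjugation preserves the last coordinate when the top group is abelian (as observed just before Proposition~\ref{case0}), no single class can meet both forms. I would therefore count the two families of classes separately and add the results.

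For the family of classes represented by $(a,b,0)$, I would read off from Proposition~\ref{case0} that $(a,b,0) \sim (c,d,0)$ precisely when the unordered pair of $G$-conjugacy classes $\{[a],[b]\}$ equals $\{[c],[d]\}$. Hence such classes are in bijection with unordered pairs (repetitions allowed) of conjugacy classes of $G$. With $k$ classes in $G$, there are $k$ such pairs with $[a]=[b]$ and $\binom{k}{2}$ with $[a]\neq[b]$, giving $k + \binom{k}{2}$ classes of this form.

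For the family represented by $(a,\iota,1)$, I would specialize Proposition~\ref{case1} by taking $d=\iota$: then $(a,\iota,1)\sim(a',\iota,1)$ if and only if $a \sim a'\iota = a'$. Thus these classes biject with the conjugacy classes of $G$, of which there are $k$. Summing the two counts gives $\left(k+\binom{k}{2}\right)+k = \binom{k}{2}+2k$, which is the claimed total.

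I expect no serious obstacle here; the entire argument is a counting exercise riding on the two preceding propositions. The one place to be careful is the bookkeeping in the first family---verifying that passing to the unordered pair $\{[a],[b]\}$ neither merges distinct classes nor splits a single one---but this is guaranteed by the two-way implication in Proposition~\ref{case0}, and the analogous point for the second family by Proposition~\ref{case1}.
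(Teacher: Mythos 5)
Your proof is correct and follows essentially the same route as the paper: both count the classes of the two forms guaranteed by Corollary~\ref{class-reps}, using Proposition~\ref{case0} to identify the $(a,b,0)$ classes with unordered pairs of conjugacy classes of $G$ and Proposition~\ref{case1} to identify the $(a,\iota,1)$ classes with conjugacy classes of $G$. The paper phrases this as exhibiting an explicit complete set of representatives, but the underlying counting is identical.
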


\begin{proof}
  Let $\{c_1 = \iota, c_2, \dots, c_k\}$ be a complete set of
  representatives for the conjugacy classes of $G$.  Given Corollary
  \ref{class-reps} and the fact that $(c_i, c_j, 0) \sim (c_j, c_i,
  0)$ (from Proposition \ref{case0}), we find that a complete set of
  representatives for the conjugacy classes of $G \wr Z_2$ is given by
  \begin{equation}
    \{ (c_i, c_j, 0) \where{i < j} \} \cup \{ (c_i, c_i,
    0) \} \cup \{(c_i, \iota, 1)\}.
  \end{equation}
  The first set contains $\binom{k}{2}$ elements, while the second and
  third contain $k$ elements each.  As the union is obviously
  disjoint, the conclusion follows.
\end{proof}

In the case $G = W_{n-1}$, the fact that the number of conjugacy
classes equals the number of irreducible representations means that
Proposition \ref{class-count} gives an alternate proof of Theorem
\ref{irred-count}.  On the other hand, comparing the set of class
representatives given in Proposition \ref{class-count} with the
$2$-tree construction given in Section \ref{wn-reptheory} shows us
that $2$-trees correspond in a very natural way with conjugacy
classes.  So there are natural bijections between $2$-trees,
irreducible representations, and conjugacy classes.

We can now compute the sizes of the conjugacy classes of $G \wr Z_2$.
For $g \in G$, let $C_g$ denote the conjugacy class of $g$ in $G$.

\begin{proposition} \label{sizes}
  \begin{enumerate}
    \item The conjugacy class of an element $(a,b,0)$, where $a \sim
    b$, has size $\abs{C_a}^2$.

    \item The conjugacy class of an element $(a,b,0)$, where $a \nsim
    b$, has size $2\abs{C_a}\abs{C_b}$.

    \item The conjugacy class of an element $(a, \iota, 1)$ has size
    $\abs{C_a}\abs{G}$.
  \end{enumerate}
\end{proposition}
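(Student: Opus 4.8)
The plan is to leverage the conjugacy characterizations already established in Propositions \ref{case0} and \ref{case1}, which tell us precisely which elements are conjugate to a given one. Once we have those descriptions, computing the class sizes reduces to pure counting. For $g \in G$ write $C_g$ for its conjugacy class in $G$, so that the conjugacy class of an element of $G \wr Z_2$ is a set of triples whose first two coordinates range over certain classes of $G$.

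For parts 1 and 2, I would start from Proposition \ref{case0}, which says that the conjugacy class of $(a,b,0)$ is exactly the set of triples $(c,d,0)$ satisfying either $c \sim a$ and $d \sim b$, or $c \sim a$ and $d \sim a$ (equivalently, either $c \in C_a, d \in C_b$ or $c \in C_b, d \in C_a$). The task is to count this set. If $a \sim b$, then $C_a = C_b$, so both alternatives describe the same set, namely $\{(c,d,0) \where c,d \in C_a\}$, which has $\abs{C_a}^2$ elements. If $a \nsim b$, then $C_a$ and $C_b$ are disjoint, so the two alternatives describe disjoint sets, each of size $\abs{C_a}\abs{C_b}$, giving $2\abs{C_a}\abs{C_b}$ in total.

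For part 3, I would invoke Proposition \ref{case1}, which states that $(c,d,1)$ is conjugate to $(a,\iota,1)$ if and only if $cd \sim a$; that is, the conjugacy class of $(a,\iota,1)$ is $\{(c,d,1) \where cd \in C_a\}$. The one nonroutine counting fact here is that for each fixed $e \in G$ there are exactly $\abs{G}$ ordered pairs $(c,d)$ with $cd = e$: choose $c$ freely, and then $d = c^{-1}e$ is forced. Summing over the $\abs{C_a}$ elements $e$ of $C_a$ yields $\abs{C_a}\abs{G}$ valid pairs, hence a class of that size.

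The main thing to get right is the treatment of overlap in the disjunction of Proposition \ref{case0}: the factor of $2$ in part 2 appears precisely because the two alternatives are disjoint when $a \nsim b$, whereas they coincide when $a \sim b$, and conflating these cases would give a wrong count. Everything else is elementary. As a sanity check I would verify consistency with the orbit--stabilizer theorem, confirming that each computed size divides $\abs{G \wr Z_2} = 2\abs{G}^2$, and that summing the sizes over the class representatives listed in Proposition \ref{class-count} recovers $2\abs{G}^2$.
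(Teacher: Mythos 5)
Your proof is correct and follows essentially the same route as the paper: apply Propositions \ref{case0} and \ref{case1} to describe each conjugacy class explicitly as a set of triples, then count, noting that the two alternatives in Proposition \ref{case0} coincide when $a \sim b$ and are disjoint when $a \nsim b$, and that for fixed $e \in G$ there are exactly $\abs{G}$ pairs $(c,d)$ with $cd = e$. (There is a harmless typo where you write ``$c \sim a$ and $d \sim a$'' for the second alternative of Proposition \ref{case0} --- it should be $c \sim b$ and $d \sim a$ --- but your parenthetical restatement and the ensuing count are correct.)
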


\begin{proof}
  \begin{enumerate}
    \item If $(c,d,0) \sim (a,b,0)$, where $a \sim b$, by Proposition
    \ref{case0} $a \sim c$ and $d \sim b \sim a$.  Hence $c$ and $d$
    may each be any element of $C_a$, so there are a total of
    $\abs{C_a}$ elements conjugate to $(a,b,0)$.

    \item If $(c,d,0) \sim (a,b,0)$, where $a \nsim b$, by Proposition
    \ref{case0} either $a \sim c$ and $b \sim d$, or $a \sim d$ and $b
    \sim c$.  In the former case there are $\abs{C_a}$ possibilities
    for $c$ and $\abs{C_b}$ possibilities for $d$, for a total of
    $\abs{C_a}\abs{C_b}$.  In the latter case there are $\abs{C_a}$
    possibilities for $d$ and $\abs{C_b}$ possibilities for $c$, again
    for a total of $\abs{C_a}\abs{C_b}$.  Furthermore, as $a \nsim b$,
    these cases must be disjoint.  Hence there are a total of $2
    \abs{C_a} \abs{C_b}$ elements conjugate to $(a,b,0)$.

    \item If $(c,d,1) \sim (a,\iota,1)$, then by Proposition \ref{case1}
    we have $a \sim cd$.  Choose an element $a' \in C_a$; we want to
    have $cd = a'$.  Now $d$ may be any element of $G$, but then we
    are forced to have $c = a'd^{-1}$.  As there are $\abs{C_a}$
    choices for $a'$ and $\abs{G}$ choices for $d$, there must be a
    total of $\abs{C_a}\abs{G}$ elements conjugate to $(a,\iota,1)$.
  \end{enumerate}
\end{proof}


\chapter{Separating Sets} \label{sepset}


\section{Regular representations}

\subsection{Separating sets for $\C W_n$}

Using the techniques described in Section \ref{conjugacy-classes}, we
have computed separating sets of class sums for the regular
representation of $W_n$, $n \le 4$.  The relevant program code is
contained in Appendix \ref{rtreecode}.  We used
character tables generated by the GAP software package for
computational algebra \cite{GAP4}, modified as described in Section
\ref{conjugacy-classes}.  The results are summarized in Table
\ref{sep-set-sizes}.

The reason for the cutoff at $n=4$ is that $W_5$ is so large that GAP
was unable to compute its character table in a reasonable amount of
time.  Running for 12 hours on a 1.2 GHz Pentium III workstation
resulted in no apparent progress.

\begin{table}[htbp]
\begin{center}
\begin{tabular}{|c|r|r|r|l|}
  \hline
  $n$ & $\abs{W_n}$ & Irreducibles & Minimal set size & Method \\
  \hline \hline 
  1 & 2 & 2 & 1 & Trivial \\
  \hline
  2 & 8 & 5 & 2 & Inspection \\
  \hline
  3 & 128 & 20 & 4 & Brute force \\
  \hline
  4 & 32768 & 230 & $\le 9$ & Greedy algorithm \\
  \hline
  
\end{tabular}
\end{center}
\caption{Separating set sizes for the regular representation of $W_n$,
$n \le 4$}
\label{sep-set-sizes}
\end{table}

The sizes of separating sets appear to us to grow suspiciously like
powers of 2.  This suspicion would be strengthened if we were able to
find a separating set of size 8 for $W_4$; unfortunately, the greedy
algorithm only yields one of size 9, and the number of representations
is so large as not to be susceptible to brute force techniques.
Nevertheless, based on the recursive nature of these groups, we would
not be surprised if $W_{n+1}$ should have a separating set of twice the
size of the smallest one for $W_n$.  Specifically:

\begin{conjecture} \label{the-big-conjecture}
  $W_n$ has a separating set consisting of $2^n$ class sums.
\end{conjecture}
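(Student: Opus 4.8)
The plan is to prove the conjecture by induction on $n$, exploiting the recursion $W_n = W_{n-1}\wr Z_2$ together with the explicit descriptions of the irreducibles (Theorem \ref{repsthm}) and of the conjugacy classes (Corollary \ref{class-reps}). By the discussion in Section \ref{conjugacy-classes}, a set of class sums is a separating set exactly when the corresponding columns of the modified character table distinguish every pair of irreducibles, so it suffices to produce, for each $n$, a set of $2^n$ conjugacy classes whose columns separate the rows of that table. I would actually prove the slightly stronger statement that such a set can always be chosen to contain the identity class, since this extra bookkeeping is what makes the recursion self-sustaining. The base cases $n\le 3$ follow from Table \ref{sep-set-sizes}: pad the known small separating sets up to size $2^n$ and include the identity class.

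First I would record how the modified character values of $W_n$ are built from those of $W_{n-1}$. Write $\hat\chi_i = \chi_i/\dim\rho_i$ for the modified characters of $W_{n-1}$. Using the tensor and induction formulas of Sections \ref{tensor}--\ref{induce-restrict} and the class representatives $(a,b,0)$ and $(a,\iota,1)$ from Corollary \ref{class-reps}, a short computation gives, for a first-type irreducible $\rho_i\tensor\rho_i\tensor\phi_k$,
\begin{align*}
  (a,b,0) &\longmapsto \hat\chi_i(a)\,\hat\chi_i(b), &
  (a,\iota,1) &\longmapsto (-1)^k\,\hat\chi_i(a)/\dim\rho_i,
\end{align*}
and for a second-type irreducible $\rho_i\tensor\rho_j\uparrow$ (with $i<j$),
\begin{align*}
  (a,b,0) &\longmapsto \tfrac12\bigl(\hat\chi_i(a)\hat\chi_j(b) + \hat\chi_i(b)\hat\chi_j(a)\bigr), &
  (a,\iota,1) &\longmapsto 0.
\end{align*}
The value of the second type on $(a,\iota,1)$ vanishes because such elements lie outside the normal subgroup $W_{n-1}\times W_{n-1}$ from which the representation is induced. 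The key features are that the \emph{swap} classes annihilate every second-type irreducible, and that on a \emph{diagonal} class $(c,c,0)$ the first type reads $\hat\chi_i(c)^2$ while the second reads $\hat\chi_i(c)\hat\chi_j(c)$.

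Given a separating set $\mathcal S$ for $W_{n-1}$ of size $2^{n-1}$ containing the identity class, the natural candidate for $W_n$ is the doubled set
\begin{equation*}
  \mathcal S' = \{\,(c,\iota,1) \where{c\in\mathcal S}\,\} \cup \{\,(c,c,0) \where{c\in\mathcal S}\,\},
\end{equation*}
which has exactly $2^n$ elements and contains the identity class $(\iota,\iota,0)$ of $W_n$. Using the formulas above I would dispatch most cases with the swap classes alone. Because the identity class lies in $\mathcal S$, the single swap class $(\iota,\iota,1)$ already separates every first-type irreducible from every second-type one, since it reads $(-1)^k/\dim\rho_i\ne 0$ versus $0$. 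For two first-type irreducibles, if they agreed on all swap classes then the $c=\iota$ coordinate would force $\hat\chi_i(c)=\hat\chi_{i'}(c)$ for all $c\in\mathcal S$, hence $i=i'$ by the separating property of $\mathcal S$, and then $(-1)^k=(-1)^{k'}$; so distinct first-type irreducibles are separated. Thus the swap classes handle every pair except two second-type irreducibles.

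This is where the real difficulty sits. Two second-type irreducibles $\{\rho_i,\rho_j\}$ and $\{\rho_{i'},\rho_{j'}\}$ vanish on all swap classes, so they can only be separated by the diagonal classes, on which they read the coordinatewise products $\bigl(\hat\chi_i(c)\hat\chi_j(c)\bigr)_{c\in\mathcal S}$ and $\bigl(\hat\chi_{i'}(c)\hat\chi_{j'}(c)\bigr)_{c\in\mathcal S}$. The hard part will be showing that these products differ whenever the unordered pairs differ; this does \emph{not} follow from the mere injectivity of $i\mapsto(\hat\chi_i(c))_{c\in\mathcal S}$ that the separating property provides, since coordinatewise products of distinct vectors can collide. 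The way forward is to strengthen the inductive hypothesis to demand that $\mathcal S$ also make these products pairwise distinct over distinct pairs of irreducibles, and then to verify that $\mathcal S'$ inherits the same multiplicative-separation property at level $n$. Propagating this strengthened invariant through the recursion---now controlling coordinatewise products of the $W_n$ character vectors, which mix the squares $\hat\chi_i(c)^2$ of the first type with the products of the second---is the crux of the argument and is likely the reason the conjecture has resisted proof; the failure of the greedy search to beat size $9$ for $W_4$ (Table \ref{sep-set-sizes}) suggests that, if the bound $2^n$ is correct, the separating set must be chosen with exactly this multiplicative care rather than greedily.
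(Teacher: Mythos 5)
This statement is stated in the paper as a \emph{conjecture}: the paper offers no proof, and in fact lists proving it as an open problem for future work. So the only question is whether your proposal actually settles it, and it does not --- as you yourself concede in the final paragraph. To give credit where it is due: your character computations are correct (the first-type irreducible $\rho_i\otimes\rho_i\otimes\phi_k$ has modified character $\hat\chi_i(a)\hat\chi_i(b)$ on $(a,b,0)$ and $(-1)^k\hat\chi_i(a)/\dim\rho_i$ on $(a,\iota,1)$, and the induced irreducibles vanish off $W_{n-1}\times W_{n-1}$), the doubled set $\mathcal S'$ does consist of $2^n$ distinct classes by Propositions \ref{case0} and \ref{case1}, and your reduction of the problem to the case of two second-type irreducibles is sound. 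This is a genuinely useful reorganization of the conjecture.

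But the remaining case is the entire content of the problem, and your argument for it is a declaration of intent, not a proof. Two second-type irreducibles read $\hat\chi_i(c)\hat\chi_j(c)$ on the diagonal class $(c,c,0)$, so you need the coordinatewise products of the level-$(n-1)$ modified character vectors, restricted to $\mathcal S$, to be injective on unordered pairs $\{i,j\}$. This is strictly stronger than the separating property of $\mathcal S$ (distinct vectors can have colliding coordinatewise products), and your proposed remedy --- strengthening the inductive hypothesis to demand multiplicative separation --- is never verified to propagate, nor is its base case checked against the explicit sets in Tables \ref{w2-sepsets} and \ref{w3-sepset}. Worse, the invariant does not even have the right shape to self-propagate: what you would need at level $n$ is multiplicative separation of the level-$n$ character vectors on $\mathcal S'$, whose diagonal coordinates are themselves squares $\hat\chi_i(c)^2$ and products $\hat\chi_i(c)\hat\chi_j(c)$, so the condition at level $n$ concerns products of \emph{four} level-$(n-1)$ characters --- a different and harder statement than the one you assumed. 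Until that invariant is formulated precisely and pushed through the recursion (or replaced by a different mechanism for separating the induced irreducibles), the statement remains exactly what the paper says it is: a conjecture.
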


We have examined the structure of the separating sets we found in
hopes of finding a pattern, but have so far been unsuccessful.  We
list here the separating sets we have found.

\subsection{Separating sets for $\C W_2$}

There are $3$ separating sets of class sums of size $2$ for $W_2$.
They can easily be found by inspection of the character table.
Inspection also shows that there is no separating set of size $1$, and
thus the separating sets of size $2$ are minimal.

First, Table \ref{w2-conj} lists the conjugacy classes of $W_2$, in
the order used by GAP.  We list a representative for each, in cycle
notation, and the corresponding $2$-tree (see
Section~\ref{wreath-conjugacy-classes}).

\begin{table}[hbtp]
\begin{center}

\begin{tabular}{|l|l|c|}
  \hline
  Index & Representative & $2$-tree \\
  \hline

  1 & $\iota$ & \begin{parsetree} ( .0. .0. .0. ) \end{parsetree}  
\quad \quad \\ 
  \hline  

  2 & $(1\:2)$ & \begin{parsetree} ( .0. .1. .0. ) \end{parsetree} 
\quad \quad \\
  \hline  

  3 & $(1\:2)(3\:4)$ & \begin{parsetree} ( .0. .1. .1. )
  \end{parsetree} 
\quad \quad \\
  \hline

  4 & $(1\:3)(2\:4)$ & \begin{parsetree} ( .1. .0. .0. )
  \end{parsetree} 
\quad \quad \\
  \hline

  5 & $(1\:4\:2\:3)$ & \begin{parsetree} ( .1. .1. .1. )
  \end{parsetree}
  \quad \quad \\

  \hline

\end{tabular}
\caption{Conjugacy classes of $W_2$} \label{w2-conj}
\end{center}
\end{table}

Given the indexing of Table \ref{w2-conj}, Table \ref{w2-sepsets}
lists all $3$ separating sets of size $2$ for the regular
representation of $W_2$.
\begin{table}[htbp]
  \begin{center}
    \begin{tabular}{|c|c|c|}
      \hline
      $\{ 2,4 \}$ &
      $\{ 2,5 \}$ &
      $\{ 4,5 \}$ \\ \hline
    \end{tabular}
  \end{center}
  \caption{Minimal separating sets for $\C W_2$} \label{w2-sepsets}
\end{table}

\subsection{Separating sets for $\C W_3$}

Again, we begin by listing the conjugacy classes of $W_3$, in Table
\ref{w3-conj}.  Using its indexing, Table
\ref{w3-sepset} gives all $40$ separating sets of class sums of size
$4$ for the regular representation of $W_3$.  There are none of size
$3$, so these are minimal.  These separating sets were obtained by
brute force search of a character table generated by GAP \cite{GAP4}.

\begin{longtable}{|l|l|c|}
  \caption{Conjugacy classes of $W_3$ \label{w3-conj}}\\
  \hline
  Index & Representative & $2$-tree \\
  \hline
  \endfirsthead
  \caption[]{Conjugacy classes of $W_3$ (continued)}\\
  \hline
  Index & Representative & $2$-tree \\
  \hline
  \endhead
1 & $\iota$  & \begin{parsetree} ( .0.  ( .0.   .0.   .0. )  ( .0.   .0.   .0. ) ) \end{parsetree} \quad \quad \\ \hline
2 & $(1\:2)$  & \begin{parsetree} ( .0.  ( .0.   .1.   .0. )  ( .0.   .0.   .0. ) ) \end{parsetree} \quad \quad \\ \hline
3 & $(1\:2)(3\:4)$  & \begin{parsetree} ( .0.  ( .0.   .1.   .1. )  ( .0.   .0.   .0. ) ) \end{parsetree} \quad \quad \\ \hline
4 & $(1\:3)(2\:4)$  & \begin{parsetree} ( .0.  ( .1.   .0.   .0. )  ( .0.   .0.   .0. ) ) \end{parsetree} \quad \quad \\ \hline
5 & $(1\:4\:2\:3)$  & \begin{parsetree} ( .0.  ( .1.   .1.   .1. )  ( .0.   .0.   .0. ) ) \end{parsetree} \quad \quad \\ \hline
6 & $(1\:2)(5\:6)$  & \begin{parsetree} ( .0.  ( .0.   .1.   .0. )  ( .0.   .1.   .0. ) ) \end{parsetree} \quad \quad \\ \hline
7 & $(1\:2)(3\:4)(5\:6)$  & \begin{parsetree} ( .0.  ( .0.   .1.   .1. )  ( .0.   .1.   .0. ) ) \end{parsetree} \quad \quad \\ \hline
8 & $(1\:2)(5\:7)(6\:8)$  & \begin{parsetree} ( .0.  ( .0.   .1.   .0. )  ( .1.   .0.   .0. ) ) \end{parsetree} \quad \quad \\ \hline
9 & $(1\:4\:2\:3)(5\:6)$  & \begin{parsetree} ( .0.  ( .1.   .1.   .1. )  ( .0.   .1.   .0. ) ) \end{parsetree} \quad \quad \\ \hline
10 & $(1\:2)(3\:4)(5\:6)(7\:8)$  & \begin{parsetree} ( .0.  ( .0.   .1.   .1. )  ( .0.   .1.   .1. ) ) \end{parsetree} \quad \quad \\ \hline
11 & $(1\:2)(3\:4)(5\:7)(6\:8)$  & \begin{parsetree} ( .0.  ( .0.   .1.   .1. )  ( .1.   .0.   .0. ) ) \end{parsetree} \quad \quad \\ \hline
12 & $(1\:4\:2\:3)(5\:6)(7\:8)$  & \begin{parsetree} ( .0.  ( .1.   .1.   .1. )  ( .0.   .1.   .1. ) ) \end{parsetree} \quad \quad \\ \hline
13 & $(1\:3)(2\:4)(5\:7)(6\:8)$  & \begin{parsetree} ( .0.  ( .1.   .0.   .0. )  ( .1.   .0.   .0. ) ) \end{parsetree} \quad \quad \\ \hline
14 & $(1\:4\:2\:3)(5\:7)(6\:8)$  & \begin{parsetree} ( .0.  ( .1.   .1.   .1. )  ( .1.   .0.   .0. ) ) \end{parsetree} \quad \quad \\ \hline
15 & $(1\:4\:2\:3)(5\:8\:6\:7)$  & \begin{parsetree} ( .0.  ( .1.   .1.   .1. )  ( .1.   .1.   .1. ) ) \end{parsetree} \quad \quad \\ \hline
16 & $(1\:5)(2\:6)(3\:7)(4\:8)$  & \begin{parsetree} ( .1.  ( .0.   .0.   .0. )  ( .0.   .0.   .0. ) ) \end{parsetree} \quad \quad \\ \hline
17 & $(1\:6\:2\:5)(3\:7)(4\:8)$  & \begin{parsetree} ( .1.  ( .0.   .1.   .0. )  ( .0.   .1.   .0. ) ) \end{parsetree} \quad \quad \\ \hline
18 & $(1\:6\:2\:5)(3\:8\:4\:7)$  & \begin{parsetree} ( .1.  ( .0.   .1.   .1. )  ( .0.   .1.   .1. ) ) \end{parsetree} \quad \quad \\ \hline
19 & $(1\:7\:3\:5)(2\:8\:4\:6)$  & \begin{parsetree} ( .1.  ( .1.   .0.   .0. )  ( .1.   .0.   .0. ) ) \end{parsetree} \quad \quad \\ \hline
20 & $(1\:8\:4\:6\:2\:7\:3\:5)$  & \begin{parsetree} ( .1.  ( .1.   .1.   .1. )  ( .1.   .1.   .1. ) ) \end{parsetree} \quad \quad \\ \hline
\end{longtable}

\begin{table}[hbtp]
  \begin{center}
    \begin{tabular}{|c|c|c|c|}
      \hline
$\{ 2,4,5,16 \}$ &
$\{ 2,4,5,18 \}$ &
$\{ 2,4,8,16 \}$ &
$\{ 2,4,8,18 \}$ \\ \hline
$\{ 2,4,12,16 \}$ &
$\{ 2,4,12,18 \}$ &
$\{ 2,5,9,16 \}$ &
$\{ 2,5,9,18 \}$ \\ \hline
$\{ 2,5,11,16 \}$ &
$\{ 2,5,11,18 \}$ &
$\{ 2,8,11,16 \}$ &
$\{ 2,8,11,18 \}$ \\ \hline
$\{ 2,9,12,16 \}$ &
$\{ 2,9,12,18 \}$ &
$\{ 2,11,12,16 \}$ &
$\{ 2,11,12,18 \}$ \\ \hline
$\{ 4,5,7,16 \}$ &
$\{ 4,5,7,18 \}$ &
$\{ 4,5,14,16 \}$ &
$\{ 4,5,14,18 \}$ \\ \hline
$\{ 4,7,8,16 \}$ &
$\{ 4,7,8,18 \}$ &
$\{ 4,7,12,16 \}$ &
$\{ 4,7,12,18 \}$ \\ \hline
$\{ 4,12,14,16 \}$ &
$\{ 4,12,14,18 \}$ &
$\{ 5,7,9,16 \}$ &
$\{ 5,7,9,18 \}$ \\ \hline
$\{ 5,7,11,16 \}$ &
$\{ 5,7,11,18 \}$ &
$\{ 5,11,14,16 \}$ &
$\{ 5,11,14,18 \}$ \\ \hline
$\{ 7,8,11,16 \}$ &
$\{ 7,8,11,18 \}$ &
$\{ 7,9,12,16 \}$ &
$\{ 7,9,12,18 \}$ \\ \hline
$\{ 7,11,12,16 \}$ &
$\{ 7,11,12,18 \}$ &
$\{ 11,12,14,16 \}$ &
$\{ 11,12,14,18 \}$ \\ \hline
    \end{tabular}
  \end{center}
  \caption{Minimal separating sets for $\C W_3$} \label{w3-sepset}
\end{table}

\subsection{Separating set for $\C W_4$}

Table \ref{w4-sepset} gives a separating
set of size $9$ of class sums for the regular representation of $W_4$.
This was obtained by greedily searching the character table of $W_4$,
as described in Section~\ref{conjugacy-classes}.  As such, this set is
\emph{not} known to be minimal; in fact, we conjecture
(\ref{the-big-conjecture}) that it has one of size $8$.  It is also
presumably not the only set of size $9$.  Unfortunately, brute force
search is infeasible for checking this.

As $W_4$ has $230$ conjugacy classes, we do not list all of them; only
those involved in the separating set.  As before, we index them as
returned by GAP \cite{GAP4}.

\newpage

\begin{longtable}{|l|l|c|}
   \caption{Separating set for $\C W_4$ \label{w4-sepset}}\\
  \hline
  Index & Representative & $2$-tree \\
  \hline
  \endfirsthead
  \caption[]{Separating set for $\C W_4$ (continued)}\\
  \hline
  Index & Representative & $2$-tree \\
  \hline
  \endhead
4 & $( 1\: 2)( 5\: 6)$  & \begin{parsetree} ( .0.  ( .0.  ( .0.   .1.   .0. )  ( .0.   .1.   .0. ) )  ( .0.  ( .0.   .0.   .0. )  ( .0.   .0.   .0. ) ) ) \end{parsetree} \quad \quad \\ \hline

20 & $( 1\: 2)( 3\: 4)( 5\: 6)( 7\: 8)( 9\:10)(11\:12)(13\:14)$  & \begin{parsetree} ( .0.  ( .0.  ( .0.   .1.   .1. )  ( .0.   .1.   .1. ) )  ( .0.  ( .0.   .1.   .1. )  ( .0.   .1.   .0. ) ) ) \end{parsetree} \quad \quad \\ \hline

32 & $( 1\: 2)( 3\: 4)( 5\: 6)( 7\: 8)( 9\:11)(10\:12)$  & \begin{parsetree} ( .0.  ( .0.  ( .0.   .1.   .1. )  ( .0.   .1.   .1. ) )  ( .0.  ( .1.   .0.   .0. )  ( .0.   .0.   .0. ) ) ) \end{parsetree} \quad \quad \\ \hline

57 & $( 1\: 4\: 2\: 3)( 5\: 6)( 7\: 8)( 9\:10)(11\:12)(13\:14)(15\:16)$  & \begin{parsetree} ( .0.  ( .0.  ( .1.   .1.   .1. )  ( .0.   .1.   .1. ) )  ( .0.  ( .0.   .1.   .1. )  ( .0.   .1.   .1. ) ) ) \end{parsetree} \quad \quad \\ \hline

62 & $( 1\: 2)( 5\: 7)( 6\: 8)( 9\:12\:10\:11)$  & \begin{parsetree} ( .0.  ( .0.  ( .0.   .1.   .0. )  ( .1.   .0.   .0. ) )  ( .0.  ( .1.   .1.   .1. )  ( .0.   .0.   .0. ) ) ) \end{parsetree} \quad \quad \\ \hline

128 & $( 1\: 6\: 2\: 5)( 3\: 7)( 4\: 8)( 9\:10)(11\:12)$  & \begin{parsetree} ( .0.  ( .1.  ( .0.   .1.   .0. )  ( .0.   .1.   .0. ) )  ( .0.  ( .0.   .1.   .1. )  ( .0.   .0.   .0. ) ) ) \end{parsetree} \quad \quad \\ \hline

133 & $( 1\: 6\: 2\: 5)( 3\: 8\: 4\: 7)$  & \begin{parsetree} ( .0.  ( .1.  ( .0.   .1.   .1. )  ( .0.   .1.   .1. ) )  ( .0.  ( .0.   .0.   .0. )  ( .0.   .0.   .0. ) ) ) \end{parsetree} \quad \quad \\ \hline

158 & $( 1\: 4\: 2\: 3)( 5\: 6)( 9\:14\:10\:13)(11\:15)(12\:16)$  & \begin{parsetree} ( .0.  ( .0.  ( .1.   .1.   .1. )  ( .0.   .1.   .0. ) )  ( .1.  ( .0.   .1.   .0. )  ( .0.   .1.   .0. ) ) ) \end{parsetree} \quad \quad \\ \hline

216 & $( 1\:10\: 2\: 9)( 3\:12\: 4\:11)( 5\:14\: 6\:13)( 7\:16\: 8\:15)$  & \begin{parsetree} ( .1.  ( .0.  ( .0.   .1.   .1. )  ( .0.   .1.   .1. ) )  ( .0.  ( .0.   .1.   .1. )  ( .0.   .1.   .1. ) ) ) \end{parsetree} \quad \quad \\ \hline
\end{longtable}

\section{Permutation representations} \label{perm-sepset-sec}

In Section \ref{haar} we described a permutation representation for
$W_n$, derived from the action of $W_n$ on the leaves of $T_n$.  We
shall use $V_n$ to denote this representation.  We now describe
separating sets of class sums for $V_n$.

Notice that $\dim V_n = 2^n$, so this representation is quite small
compared to the group itself.  Furthermore, the number of irreducibles
into which it decomposes is even smaller.  It can be shown in general
\cite{foote:image1} that $V_n$ is the direct sum of $n+1$
nonisomorphic irreducible submodules, and each appears in the sum with
multiplicity $1$.  Since there are fewer submodules to be separated,
separating sets are much easier to find, and much smaller.

The algorithm for finding these separating sets is much as before,
except the table we use contains only the characters for those
irreducible representations which make up $V_n$.  These can easily be
found using the inner product relation described in Theorem
\ref{character-inner}.  The character for $V_n$ is easy to compute:
since a permutation matrix contains a $1$ on the diagonal for each
fixed point, the character of an element $g$ is equal to the number of
leaves of $T_n$ which it fixes.  Once we have identified the
irreducible representations involved, which correspond to rows in the
character table, we can remove all other rows and perform a brute force
or greedy search on the remaining table.

Table \ref{perm-sepset} lists some separating sets for these
permutation representations.  Due to their small size, they were all
found using brute-force search.  As there are usually many, we do not
list them all.  We give only an example or two for each.  The numbers
refer to our previous indexing of conjugacy classes, and the following
column lists where this indexing can be found.

\begin{table}[hbtp]
  \begin{center}
    \begin{tabular}{|r|r|r|r|r|l|l|}
      \hline
      $n$ & $\dim V_n$ & \rightpara{.75in}{Number of isotypic
      subspaces} 
      & \rightpara{.6in}{Minimal set size} & \rightpara{.6in}{Number of sets}& Example & See
      table \\
      \hline \hline
      2 & 4 & 3 & 1 & 2 & $\{4\}, \{5\}$ & \ref{w2-conj} \\
      \hline
      3 & 8 & 4 & 2 & 60 & $\{2, 16\}$ & \ref{w3-conj} \\
      \hline
      4 & 16 & 5 & 2 & 1940 & $\{32, 216 \}$ & \ref{w4-sepset}  \\
      \hline \hline
      $n$ & $2^n$ & $n+1$ & ? & ? & --- & --- \\
      \hline
    \end{tabular}
  \end{center}
  \caption{Separating sets for the permutation representation of
  $W_n$, $n \le 4$} \label{perm-sepset}
\end{table}
\chapter{Conclusion} \label{conclusion}

\section{Closing remarks}

This area of mathematics has proven for us to be a very intriguing
one, uniting elements of algebra and combinatorics from ``pure''
mathematics with ``applied'' ideas from spectral analysis, algorithms,
and computational linear algebra.  We hope that our exposition and
results can spur, in some small way, further interest in the field.

\section{Future work}

Our research in this area has raised many more questions than it has
answered.  We will list several problems which we feel are worthy of
future investigation.

\subsection{Conjecture \ref{the-big-conjecture}}

A proof of Conjecture \ref{the-big-conjecture} would be very nice to
have, especially if it is constructive.  Armed with separating sets
for all $W_n$, we would immediately have an isotypic projection
algorithm.

It would also be useful to find a bound for the sizes of minimal
separating sets for the permutation representation of $W_n$, as
described in Section \ref{perm-sepset-sec}.

\subsection{Other separating sets}

We considered only separating sets consisting of class sums.  Although
these have many nice properties, they are not necessarily optimal.
Other possibilities should be considered.  In particular, thinking of
the Jucys-Murphy elements for $S_n$ (see Section \ref{jucys-murphy}),
one could consider conjugacy classes intersected with subgroups,
or some similar construction.  It is especially suggestive that $S_1
\subgroup S_2 \subgroup \dots \subgroup S_n$ and $W_1 \subgroup W_2
\subgroup \dots \subgroup W_n$ both have a strongly recursive
structure.  Also, the Jucys-Murphy elements separate representations
into finer pieces than isotypic submodules, and can even be used to
compute a genuine discrete Fourier transform; it would be very helpful
to be able to duplicate these properties for $W_n$.

\subsection{Computational bounds}

We concentrated on finding minimal-size separating sets.  However, as
we saw in Section \ref{cyclic-example}, minimal size is not always
best when we actually want to compute projections.  In fact, in order
to say anything about the computational properties of the separating
sets we found, we would have to look at how the eigenspaces of our
elements interact, considering the dimensions of their intersections
as they decompose the space.  Once computational bounds are
established for isotypic projections using our separating sets, we
could evaluate them with respect to other possible separating sets to
find one with the best computational properties.

\subsection{Greedy algorithm}

In Section \ref{separating-sets}, we described a greedy algorithm for
quickly finding separating sets from a character table.  It would be
useful to know how optimal its results are.  As mentioned in Appendix
\ref{reduction}, a greedy algorithm for a related problem (MINIMUM
TEST COLLECTION) has been well studied, and it seems likely that these
results could be brought to bear on the separating set problem.  Since
this method bounds separating set sizes, it is possible that we could
thus obtain a (nonconstructive) proof for Conjecture \ref{the-big-conjecture}.

\subsection{Extensions to iterated wreath products of cyclic groups}

We have only examined the group $W_n = Z_2 \wr \dots \wr Z_2$.  More
generally, the groups $W_{n,r} = Z_r \wr \dots \wr Z_r$ are also of
great interest.  Many of our results about the groups extend to this
case (especially when $r$ is prime); see also \cite{orrison:root} and
\cite{foote:image1}.  In particular, the case $r=4$ gives rise to a
so-called ``wreath product transform'' with useful applications in
image processing (see \cite{foote:image1}).

\appendix
\raggedbottom\sloppy


\chapter{NP-Completeness of Finding Separating Sets of Class Sums From
Character Tables} \label{reduction}

We mentioned in Section \ref{conjugacy-classes} that a separating set
of class sums for any given group can be found by examining a modified
character table.  The problem, precisely stated, is the following.


\begin{problem}{(SEPARATING SET)}
  Given an $n \times m$ matrix $(b_{ij})$ (in our case, the table of
  eigenvalues) and an integer $k$, do there exist integers $1 \le c_1,
  \dots, c_k \le m$ such that for every pair $1 \le i_1, i_2 \le n$,
  there exists $1 \le j \le k$ such that $b_{i_1 c_j} \ne b_{i_2
  c_j}$?  In other words, can we tell any two rows apart by looking
  only in columns $c_1, \dots, c_k$?
\end{problem}

\bigskip

This problem boils down to ``does there exist a separating set of size
$k$?''  If we can solve this problem efficiently, we can find a
minimal-size separating set by attempting it for ever-increasing $k$
until we find one that works.

Unfortunately, we will show that SEPARATING SET is NP-complete.  This
means that if it has a polynomial-time solution, then so does every
other problem in the class NP of problems whose solutions can be
verified in polynomial time.  This would imply that NP is equal to P,
the class of problems with polynomial-time solutions.  It is
universally believed (though not proven, remaining a famous open
conjecture) that this is not the case.  For more details on the theory
of NP-completeness, see \cite{garey-johnson}.

Our proof of this assertion is by reduction from a problem called
MINIMUM TEST COLLECTION, which we describe here.

\begin{problem}{(MINIMUM TEST COLLECTION)}
  Given a finite set $A$, a collection $C \subset \mathcal{P}(A)$, and
  a number $J \le \abs{A}$, does there exist a subcollection $C'
  \subset C$ with $\abs{C'} \le J$ such that for every pair $a_1, a_2
  \in A$, there exists a set $S \in C'$ such that $S$ contains exactly
  one of $a_1$ and $a_2$ (in other words, $\abs{\{a_1, a_2\} \cap S} =
  1$)?
\end{problem}

\bigskip

This problem can be considered as one of medical diagnosis: imagine
$A$ is a set of diseases, and $C$ is a collection of tests, each of
which will return ``positive'' in the presence of some diseases, and
``negative'' for the rest.  As such, each test may be associated with
the set of diseases for which it returns ``positive.''  The question
is, do $k$ tests suffice to narrow the diagnosis to a single disease?

It is shown in \cite{garey-johnson} (page 71) that MINIMUM TEST COLLECTION is
NP-complete.  We now show that SEPARATING SET is as well.

\begin{theorem}
  SEPARATING SET is NP-complete.
\end{theorem}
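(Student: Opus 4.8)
The plan is the standard two-part argument for NP-completeness: first I would show that SEPARATING SET lies in NP, and then exhibit a polynomial-time reduction from MINIMUM TEST COLLECTION, which we have already cited as NP-complete.

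Membership in NP is immediate. A certificate is simply a list of columns $c_1, \dots, c_k$. To verify it, I would run through all $\binom{n}{2}$ pairs of rows and, for each pair, scan the $k$ chosen columns to check that the two rows differ somewhere among them. This takes $O(kn^2)$ comparisons, which is polynomial in the size of the input matrix, so the verification is efficient.

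For NP-hardness, I would reduce from MINIMUM TEST COLLECTION. Given an instance with $A = \{a_1, \dots, a_n\}$, a collection $C = \{S_1, \dots, S_m\} \subset \mathcal{P}(A)$, and a bound $J$, I would build the $n \times m$ incidence matrix $(b_{ij})$ defined by $b_{ij} = 1$ if $a_i \in S_j$ and $b_{ij} = 0$ otherwise, and set $k = J$. This matrix is constructed in time $O(nm)$, so the reduction is polynomial. The observation driving correctness is that, because every entry is $0$ or $1$, two rows $i_1$ and $i_2$ satisfy $b_{i_1 c_j} \ne b_{i_2 c_j}$ precisely when exactly one of $a_{i_1}, a_{i_2}$ belongs to $S_{c_j}$ --- that is, precisely when the test $S_{c_j}$ separates the pair in the MINIMUM TEST COLLECTION sense. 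Hence a choice of columns $c_1, \dots, c_k$ distinguishes every pair of rows if and only if the corresponding subcollection $\{S_{c_1}, \dots, S_{c_k}\}$ separates every pair of elements of $A$, so the two instances are yes-instances together. Combined with membership in NP, this establishes that SEPARATING SET is NP-complete.

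I expect no serious obstacle here: the reduction is essentially the remark that a test collection and a matrix with entries in $\{0,1\}$ are the same object viewed two ways. The only point demanding a little care is writing out the correctness equivalence in both directions and noting that the column-to-set correspondence $c_j \mapsto S_{c_j}$ is a bijection between candidate solutions of the two problems, so that the size bounds $k$ and $J$ match exactly. It is worth remarking that, since the reduction only ever produces matrices with entries in $\{0,1\}$, it already proves hardness for this restricted special case; the matrices arising from modified character tables carry arbitrary complex entries, but that added generality can only make the problem harder, so the general result follows a fortiori.
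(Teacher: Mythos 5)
Your proposal is correct and follows essentially the same route as the paper: membership in NP by direct verification over all pairs of rows, followed by a reduction from MINIMUM TEST COLLECTION via the $0$--$1$ incidence matrix with $k = J$. Your closing remark that the reduction already establishes hardness for the restricted class of $\{0,1\}$-matrices is a nice observation, but the substance of the argument is the same as the paper's.
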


\begin{proof}
  First, it is obvious that SEPARATING SET is in NP, since a solution
  can be verified in polynomial time.  Given the integers $c_1, \dots,
  c_k$, we can test that any pair of rows $i_1, i_2$ is ``separated''
  by looking at the $k$ pairs $a_{i_1 c_j}, a_{i_2 c_j}$ for $1 \le j
  \le k$.  Repeating this for each of the $\binom{n}{2} \le n^2$ pairs of
  rows and noticing that $k \le m$, we find that verification requires
  only $O(n^2m)$ time.

  Now, suppose we have an instance $(A = \{a_1, \dots, a_n\}, C =
  \{C_1, \dots, C_m\}, J)$ of MINIMUM TEST
  COLLECTION.  We can convert it in polynomial time to an instance of
  SEPARATING SET.  Construct a $n \times m$ matrix $(b_{ij})$ where
  \begin{equation*}
    b_{ij} = 
    \begin{cases}
      1, & a_i \in C_j \\
      0, & a_i \notin C_j
    \end{cases}.
  \end{equation*}
  Set $k = J$.  We show that our instance of SEPARATING SET has a
  solution if and only if our instance of MINIMUM TEST COLLECTION did.

  Suppose that the constructed instance of SEPARATING SET has a
  solution $c_1, \dots, c_k$.  Then for any pair $1 \le i_1, i_2 \le
  n$, there is some $1 \le j \le k$ such that $b_{i_1 c_j} \ne b_{i_2
  c_j}$.  Suppose without loss of generality that $b_{i_1 c_j} = 0$
  and $b_{i_2 c_j} = 1$.  Then $a_{i_1} \in C_{c_j}$ and $a_{i_2}
  \notin C_{c_j}$.  As we can do the same for every pair $i_1, i_2$,
  it follows that the set $C' = \{C_{c_1}, \dots, C_{c_k}\}$ is of
  size $k=J$ and satisfies the conditions required by MINIMUM TEST
  COLLECTION.

  Suppose that the given instance of MINIMUM TEST COLLECTION has a
  solution $C' = \{S_{c_1}, \dots, S_{c_J}\}$.  Then for every pair
  $a_{i_1}, a_{i_2} \in A$, there exists some $S_{c_j} \in C'$ such
  that (without loss of generality) $a_{i_1} \in S_{c_j}$ but $a_{i_2}
  \notin S_{c_j}$.  Then we have $b_{i_1 c_j} = 1 \ne 0 = b_{i_2
  c_j}$.  As this is true for every pair $i_1, i_2$, the set $c_1,
  \dots, c_J$ is of size $J=k$ and satisfies the conditions required
  by SEPARATING SET.

  Thus, a polynomial-time solution for SEPARATING SET would
  immediately yield one for MINIMUM TEST COLLECTION, and thus (since
  MINIMUM TEST COLLECTION is NP-complete) for every other problem in
  NP.  Hence, since SEPARATING SET is also in NP, we have that
  SEPARATING SET is NP-complete.
\end{proof}

Notice that we do \emph{not} claim that a minimal separating set can
never be found in polynomial time.  For one thing, we have assumed
nothing about the structure of the table $(b_{ij})$.  It is possible
that when $(b_{ij})$ is actually a modified character table for some
group, it has properties which could allow us to find a separating set
more efficiently.  Also, there may be other ways to find a separating
set besides simply examining the character table.

It is mentioned in \cite{mintest} that MINIMUM TEST COLLECTION has a
greedy approximation algorithm which produces a collection
within $1+2\ln \abs{S}$ of optimal.  It is further shown that
improving upon this approximation is NP-complete.  In Section
\ref{separating-sets} we mention a greedy algorithm for SEPARATING
SET; it would be interesting to consider whether a similar bound can
be shown to apply for it.
\chapter{Program for Computing $r$-trees and Separating Sets for
$W_n$} \label{rtreecode}


\docode{sepset.cc}{Given a character table, computes separating sets.
  Contains functions for computation by either brute force or a greedy
  algorithm.}{sepset.cc}

\docode{Makefile}{Controls compilation of all other
  files.}{rtree/Makefile}

\docode{wreath.h}{Header file for wreath product-related utility
  functions.}{rtree/wreath.h}

\docode{wreath.cc}{Utility routines related to wreath
  products.}{rtree/wreath.cc}

\docode{rtree.h}{Header file for $r$-tree computation routines.}{rtree/rtree.h}

\docode{rtree.cc}{Computes all $r$-trees of desired height and
$r$.}{rtree/rtree.cc}

\docode{gen\_rtrees.cc}{Driver program to generate and print out
  rtrees.}{rtree/gen_rtrees.cc}

\docode{conjclasses.cc}{Computes conjugacy class representatives and
  sizes for corresponding $r$-trees.}{rtree/conjclasses.cc}

\docode{conjclasses\_main.cc}{Driver program to print out conjugacy
  class representatives.}{rtree/conjclasses_main.cc}




\nocite{*}   
\bibliographystyle{plain}
\bibliography{bib}

\end{document}